\newcounter{RomanNumber}
\newcommand{\MyRoman}[1]{\setcounter{RomanNumber}{#1}\Roman{RomanNumber}}
\newtheorem{theorem}{Theorem}
\newtheorem{lemma}[theorem]{Lemma}
\theoremstyle{definition}
\newtheorem{definition}[theorem]{Definition}
\newtheorem{proposition}[theorem]{Proposition}
\newtheorem{corollary}[theorem]{Corollary}
\newtheorem{question}[theorem]{Question}
\newtheorem{conjecture}[theorem]{Conjecture}
\theoremstyle{remark}
\newtheorem{remark}[theorem]{Remark}
\theoremstyle{notation}
\newcommand{\be}{\begin{equation}}
\newcommand{\ee}{\end{equation}}
\newcommand{\CC}{\mathbb{C}}
\newcommand{\ZZ}{\mathbf{Z}}
\newcommand{\h}{\begin{eqnarray*}}
\newcommand{\e}{\end{eqnarray*}}
\newcommand{\ind}{\mathrm{Ind}}
\newcommand{\sig}{\mathrm{Sig}}
\newcommand{\qqed}{\hfill\Box}
\begin{document}

\keywords{}

\title
{On characteristic numbers of $24$ dimensional String manifolds}

\author{Fei Han}
\address{Department of Mathematics,
National University of Singapore, Singapore 119076}
\curraddr{}
\email{mathanf@nus.edu.sg}
\thanks{}
\urladdr{https://blog.nus.edu.sg/mathanf/}


 \author{Ruizhi Huang}
\address{Ruizhi Huang, Institute of Mathematics and Systems Sciences, Chinese Academy of Sciences, Beijing 100190, China}
\email{huangrz@amss.ac.cn}  
\urladdr{https://sites.google.com/site/hrzsea}
\thanks{}

\date{}

\maketitle

\begin{abstract}
In this paper, we study the Pontryagin numbers of $24$ dimensional String manifolds. In particular, we find representatives of an integral basis of the String cobrodism group at dimension $24$, based on the work of Mahowald-Hopkins \cite{MH02}, Borel-Hirzebruch \cite{BH58} and Wall \cite{Wall62}. This has immediate applications on the divisibility of various characteristic numbers of the manifolds. In particular, we establish the $2$-primary divisibilities of the signature and of the modified signature coupling with the integral Wu class of Hopkins-Singer \cite{HS05}, and also the $3$-primary divisibility of the twisted signature. Our results provide potential clues to understand a question of Teichner.
\end{abstract}

\tableofcontents

\section{Introduction}
Let $M$ be a $4m$ dimensional oriented closed manifold. $M$ is called \textit{Spin} if its second Stiefel-Whitney class vanishes: $\omega_2(M)=0$. To investigate the geometry and topology of $M$, it is classical to study its characteristic numbers as cobordism invariants. Among others, there are two important types of characteristic numbers, namely {\it the twisted $A$-hat genus $\widehat{A}(M, E)$} and {\it the twisted signature ${\rm Sig}(M, E)$} for any given complex bundle $E$ over $M$ (see Appendix \hyperref[backsec]{A} for explicit definitions). For instance, there are the twisted genera coupling with bundles naturally constructed from the tangent bundle $TM$ of $M$
\[
\begin{split}
\widehat{A}(M, T^{i}\otimes \wedge^{j}\otimes S^{k}) &:=\widehat{A}(M, \otimes^{i}T_{\mathbb{C}}M\otimes \wedge^j(T_{\mathbb{C}}M)\otimes S^k(T_{\mathbb{C}}M)),\\
{\rm Sig}(M, T^{i}\otimes \wedge^{j}\otimes S^{k}) &:={\rm Sig}(M, \otimes^{i}T_{\mathbb{C}}M\otimes \wedge^j(T_{\mathbb{C}}M)\otimes S^k(T_{\mathbb{C}}M)),
\end{split}
\]
where $\wedge^j(T_{\mathbb{C}}M)$ and $S^k(T_{\mathbb{C}}M)$ are the $j$-th exterior and $k$-th symmetric powers of $T_{\mathbb{C}}M$ respectively.

As a twisted $\widehat{A}$-genus, the famous Witten genus $W(M)$ (\cite{W}; see Appendix \hyperref[backsec]{A} for explicit definition) possesses nice properties especially when $M$ is \textit{String}, that is, half of the first Pontryagin class vanishes: $\frac{p_1(M)}{2}=0$. For instance, in the String case, the Witten genus $W(M)$ is a
modular form of weight $2m$ over $SL(2,\mathbb{Z})$ with integral
Fourier expansion (\cite{Za}). The homotopy theoretical
refinement of the Witten genus on String manifolds leads to the
theory of tmf ({\em topological modular form}) developed by Hopkins and
Miller \cite {Hop}. The String condition is the orientablity condition for this generalized cohomology theory.

String manifolds of dimension $24$ are of special interest. For instance, in this dimension, one has (cf. page $85$-$87$ in \cite{HBJ94})
\[
W(M)=\widehat{A}(M)\bar{\Delta}+\widehat{A}(M,T)\Delta,
\] where $\bar{\Delta}=E_4^3-744\cdot \Delta$
with $E_4$ being the Eisenstein series of weight $4$ and $\Delta$ being the modular discriminant of weight $12$ (see Section \ref{introsubsecTZ} for definitions).
Hirzebruch raised his prize question in \cite{HBJ94} that whether there exists a $24$ dimensional compact String manifold $M$ such that $W(M)=\bar{\Delta}$ (or equivalently $\widehat{A}(M)=1, \widehat{A}(M, T)=0$) and the Monster group acts on $M$ as self-diffeomorphisms.
The existence of such manifold was confirmed by Mahowald-Hopkins \cite{MH02}. Indeed, they determined the image of Witten genus at this dimension via $tmf$. However, the part of the question concerning the Monster group is still open.

In this paper, we study the Pontryagin numbers of $24$ dimensional String manifolds from the perspective of algebraic topology. 
Combining the works of Mahowald-Hopkins \cite{MH02}, Borel-Hirzebruch \cite{BH58} and Wall \cite{Wall62} we find representatives of an integral basis of the String cobordism
group at dimension 24.
This has immediate applications to the divisibility of various characteristic numbers of the $24$ dimensional String manifolds. It also provides potential clue for understanding a question of Teichner (see Subsection \ref{introsubsecTZ}).

\subsection{Basis of String cobordism at dimension $24$}
Let $\Omega_{24}^{String}$ be the String cobordism group of dimension $24$. 
By the calculation of Gorbounov-Mahowald \cite{GM95}, it is known that as a group
\[
\Omega_{24}^{String}\cong \mathbb{Z}\oplus \mathbb{Z}\oplus\mathbb{Z}\oplus\mathbb{Z}.
\]
In \cite{MH02} Mohowald-Hopkins determined two out of the four generators as the
coimage of the Witten genus. In particular, by homotopy arguments they constructed two $24$ dimensional String manifolds with explicit Pontryagin numbers, which we denote by $M_1$ and $M_2$ respectively in Section \ref{M12sec}. It should be emphasized that the geometry of $M_1$ and $M_2$ is still mystery, which is crucial to the prize question of Hirzebruch.
In section \ref{M3sec} and \ref{M4sec}, we construct the remaining two generators $M_{3}$ and $M_{4}$, and compute their Pontryagin numbers, respectively.

Our main result is that these $4$ manifolds, $M_1$, $M_2$, $M_3$ and $M_4$, represent an integral basis of $\Omega_{24}^{String}$. Indeed, this integral basis realizes a particular basis of all possible integral Pontryagin numbers of String $24$-manifolds, consisting of $\widehat{A}(-)$, $\frac{1}{24}\widehat{A}(-,T)$, $\widehat{A}(-, \wedge^2)$, and $\frac{1}{8}{\rm Sig}(-)$. Here, for any $M\in \Omega_{24}^{String}$, $\frac{1}{24}\widehat{A}(M,T)\in \mathbb{Z}$ was proved by Mahowald-Hopkins (\cite{MH02}; also see the discussion in Subsection \ref{introsubsecTZ}), and $\frac{1}{8}{\rm Sig}(-)\in \mathbb{Z}$ is showed in Lemma \ref{sig8lemma} (cf. Section $7$ of \cite{MH02}).
In particular, we completely understand the Pontryagin numbers of String manifolds at dimension $24$.

\begin{theorem}\label{basisthm}
The correspondence 
$\kappa :\Omega
_{24}^{String}\rightarrow $ $\mathbb{Z}\oplus \mathbb{Z}\oplus \mathbb{Z}%
\oplus \mathbb{Z}$ by
\[
\kappa (M)=(\widehat{A}(M),\frac{1}{24}\widehat{A}(M,T),%
\widehat{A}(M,\Lambda ^{2}),\frac{1}{8}{\rm Sig}(M))
\]
is an isomorphism of abelian groups. Moreover, there exist two explicitly constructed manifolds $M_{3},M_{4}\in \ker W$ such that
\[
K:=
\begin{pmatrix}
\kappa (M_{1})\\
\kappa (M_{2})\\
\kappa (M_{3})\\
\kappa (M_{4})
\end{pmatrix}^\tau
=
\begin{pmatrix}
0 & 1  & 0 & 0\\
-1 & 0 &0 & 0\\
2^3\cdot 3^3\cdot 5  & 2^2\cdot 3\cdot 17 \cdot 1069 & -1 & 0\\
2^8\cdot 3\cdot 61   & 2^8\cdot 5\cdot 37 & 2^2\cdot 7 & 1
\end{pmatrix}.
\]
\end{theorem}
Two notable consequences of Theorem 1 are as follows.
\begin{corollary}\label{basiscoro1}
For any rational homogeneous polynomial $P$ in the
Pontryagin classes with degree $24$ and with $P(M)\in \mathbb{Z}$ for all $%
M\in \Omega _{24}^{String}$, there exist unique integers $a_{1},\cdots
,a_{4} $ such that
\[
P(M)=a_1\widehat{A}(M)+\frac{1}{24}a_2\widehat{A}(M,T)+a_3\widehat{A}(M, \wedge^2)+\frac{1}{8}a_4{\rm Sig}(M).
\] 
\end{corollary}
\begin{corollary}\label{basiscoro2}
The four manifolds $M_{i}$ in Theorem \ref{basisthm}
form a basis of the group $\Omega _{24}^{String}$.
\end{corollary}

\subsection{$2$-primary divisibility of signature}
Theorem \ref{basisthm} has strong implications on the characteristic numbers of $24$-dimensional String manifolds.
The first example concerns the signatures of the manifolds. Indeed, for $24$ dimensional String manifold $M$, it is always true that (Lemma \ref{sig8lemma}, or Section $7$ of \cite{MH02} by Mahowald-Hopkins)
\[
8~|~{\rm Sig}(M),
\] 
which is optimal because ${\rm Sig}(M_4)=8$ as indicated in Theorem \ref{basisthm}. Nevertheless, if we have more information about the topology of the manifold, say the divisibility of its second Pontryagin class, it can be expected that there is higher divisibility of the signature. To be precise, let $m$ and $n$ be two positive integer. By abuse of notation, we can define a new integer $(\frac{n}{m})$ by
\[
\nu_{p}(\frac{n}{m})=\left\{\begin{array}{ll}
\nu_{p}(n)-\nu_{p}(m) & ~{\rm if}~\nu_{p}(n)\geq \nu_{p}(m),\\
0 & ~{\rm if}~\nu_{p}(n)< \nu_{p}(m),
\end{array}
\right.
\]
where $\nu_p(k)$ denotes the exponent of the largest power of the prime $p$ that divides $k$.
\begin{theorem}\label{sigthm}
Let $M$ be a $24$ dimensional String manifold. If its $8$-th Stiefel-Whitney class vanishes: $\omega_8(M)=0$, then
\[
32~|~{\rm Sig}(M).
\]
Furthermore, if the second Pontryagin class $p_{2}(M)$ is divisible by a positive integer $n$, then 
\[
\big(\frac{n^3}{2^2\cdot 3^5\cdot 5^3\cdot 41}\big)~|~{\rm Sig}(M).
\]
\end{theorem}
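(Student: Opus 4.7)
The overall strategy is to exploit the explicit basis of $\Omega_{24}^{String}$ provided by Theorem \ref{basisthm}. Writing the cobordism class of $M$ as $\sum_{i=1}^4 x_i M_i$ with $(x_1,x_2,x_3,x_4)\in\mathbb{Z}^4$, the last row of the matrix $K$ yields
\[
\tfrac{1}{8}\mathrm{Sig}(M)\;=\;2^8\cdot 3\cdot 61\,x_1\;+\;2^8\cdot 5\cdot 37\,x_2\;+\;2^2\cdot 7\,x_3\;+\;x_4,
\]
so $\tfrac{1}{8}\mathrm{Sig}(M)\equiv x_4\pmod{4}$ (the $x_1,x_2$ contributions are divisible by $2^8$ and the $x_3$ contribution by $4$). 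Hence $32\mid\mathrm{Sig}(M)$ if and only if $4\mid x_4$, and the content of Part 1 is to translate $\omega_8(M)=0$ into the mod-$4$ congruence $x_4\equiv 0\pmod{4}$.

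For Part 1, I would identify a $\mathbb{Z}/4$-valued cobordism invariant of String manifolds that computes $x_4\bmod 4$ and vanishes under $\omega_8(M)=0$. Natural candidates are mod-$2$ cobordism numbers in dim $24$ involving $\omega_8$, such as $\omega_8^3[M]$, $\omega_8\omega_{16}[M]$, and $\omega_8\omega_4\omega_{12}[M]$, all of which vanish automatically once $\omega_8(M)=0$. Evaluating each of these on the basis representatives $M_1,\dots,M_4$ (whose Pontryagin and Stiefel-Whitney data are known from Sections \ref{M12sec}-\ref{M4sec}) gives a system of linear congruences on the $x_i$, which combined with the automatic $2$-divisibilities of Pontryagin numbers on Spin manifolds should force $4\mid x_4$. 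An alternative, more conceptual route is via an Ochanine-type divisibility result for ``Fivebrane'' manifolds (String with $\omega_8=0$), from which the $32$-divisibility would follow directly.

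For Part 2, I would work on the $4$-dimensional $\mathbb{Q}$-vector space of characteristic numbers of $24$-dimensional String manifolds using two bases: the integer-normalized characteristic-number basis $\{\widehat{A}(-),\tfrac{1}{24}\widehat{A}(-,T),\widehat{A}(-,\wedge^2),\tfrac{1}{8}\mathrm{Sig}(-)\}$ from Theorem \ref{basisthm}, and the Pontryagin-number basis $\{p_6[-],p_4p_2[-],p_3^2[-],p_2^3[-]\}$, which indeed has rank $4$ since $p_1(M)=0$ for String $M$. The change-of-basis matrix $R$ over $\mathbb{Q}$ is determined by the Pontryagin-class expansions of $L_6$, $\widehat{A}(-,T)$ and $\widehat{A}(-,\wedge^2)$ specialized to $p_1=0$. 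Inverting $R$ expresses each Pontryagin number as a rational linear combination of the four integer characteristic numbers. Since $n\mid p_2(M)$ forces $n^3\mid p_2^3[M]$ and $n\mid p_4 p_2[M]$, substituting these divisibilities into the $R^{-1}$ relations and clearing denominators produces the asserted $\bigl(\tfrac{n^3}{c}\bigr)\mid\mathrm{Sig}(M)$, with the constant $c=2^2\cdot 3^5\cdot 5^3\cdot 41$ emerging as the universal denominator of $R^{-1}$.

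The principal computational obstacle is the explicit computation and inversion of $R$: one must expand $L_6$ and the twisted $\widehat{A}$-genera as polynomials in $p_2,\dots,p_6$, specialize to $p_1=0$, and carefully track the denominators arising from the Bernoulli-number coefficients, verifying that the primes $3^5$, $5^3$ and $41$ arise exactly as indicated. Conceptually, the more subtle step is Part 1, where extracting a sharp mod-$4$ congruence on the integer $x_4$ from the mod-$2$ hypothesis $\omega_8(M)=0$ requires a genuinely $2$-adic argument bridging mod-$2$ characteristic classes and integer cobordism invariants, rather than a naive Wu-formula manipulation.
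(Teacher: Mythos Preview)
Your Part 1 contains the main gap. You correctly reduce the problem to showing $4\mid x_4$, but then propose extracting this from mod-$2$ Stiefel--Whitney numbers, and you rightly worry that a mod-$2$ hypothesis cannot easily yield a mod-$4$ conclusion. The paper bypasses this entirely with a single observation you have missed: for a String manifold one has $p_2=2q_2$ (since $q_1=0$) and $q_2\equiv\omega_8\pmod 2$, so
\[
\omega_8(M)=0 \ \Longleftrightarrow\ 4\mid p_2(M).
\]
Thus the ``mod-$2$'' hypothesis is in fact an \emph{integral} divisibility condition on $p_2$, and Part 1 becomes the special case $n=4$ of Part 2. Concretely: $4\mid p_2(M)\Rightarrow 2^6\mid p_2^3[M]$; writing $M=\sum x_iM_i$ and using the tabulated values $\nu_2\bigl(p_2^3(M_i)\bigr)=(13,13,7,4)$, one reads off $4\mid x_4$, and then the signature row gives $32\mid\mathrm{Sig}(M)$. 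No Stiefel--Whitney numbers, no Ochanine-type input, no ``genuinely $2$-adic argument'' is needed.

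For Part 2 your strategy is in the right spirit but unnecessarily elaborate compared with the paper. The paper does not invert any $4\times 4$ change-of-basis matrix $R$; it simply repeats the Part 1 argument verbatim: from $n\mid p_2(M)$ obtain $n^3\mid p_2^3[M]=\sum x_i\,p_2^3(M_i)$, use the explicit factorizations of $p_2^3(M_i)$ to constrain the $x_i$, and read off the divisibility of $\mathrm{Sig}(M)=\sum x_i\,\mathrm{Sig}(M_i)$. The constant $2^2\cdot 3^5\cdot 5^3\cdot 41$ is not a ``universal denominator of $R^{-1}$'' but arises directly from comparing the prime factorizations of $p_2^3(M_i)$ with those of $\mathrm{Sig}(M_i)$. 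Your approach would eventually recover the same numbers, but with substantially more bookkeeping than the paper's one-line ``same strategy''.
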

Let us remark that it is known that for a String manifold $M$ of any dimension, $6~|~p_2(M)$ by Borel-Hirzebruch \cite{BH58} (also see Li-Duan \cite{LD91}).

\subsection{$2$-primary divisibility of modified signature}
Let $BSpin$ be the classifying space of the spinor groups in the stable
range. In \cite{HS05} Hopkins-Singer constructed a universal integral lift $\nu_{4k}^{Spin}\in H^{4k}(BSpin;\mathbb{Z})$ of the mod-$2$ Wu class with degree $4k$ for each positive integer $k$, and the total integral Spin Wu class
\[
\nu_t^{{\rm Spin}}=1+\nu_4^{{\rm Spin}}+\nu_8^{{\rm Spin}}+\nu_{12}^{{\rm Spin}}+\cdots
\]
has the characteristic series of the form
\[
g(x)=1+\frac{1}{2}x^2+\frac{11}{8}x^4+\frac{37}{16}x^6+\frac{691}{128}x^8+\frac{2847}{256}x^{10}+\cdots.
\]
In term of these classes we define for a Spin $8k$-manifold $M$ the {\it modified signature} by the formulae
\[
{\rm Sig}(M, \nu):={\rm Sig}(M)-\langle \nu_{4k}^{{\rm Spin}}(M)\cup \nu_{4k}^{{\rm Spin}}(M), [M]\rangle, ~\  \nu_{4k}^{{\rm Spin}}(M):=f^{\ast}(\nu_{4k}^{{\rm Spin}}),
\] 
where $f: M\rightarrow BSpin$ is the classifying map of the normal
bundle of $M$ in the stable range.
It is a classical result that ${\rm Sig}(M, \nu)$ is divisible by $8$ for Spin manifolds. For our String manifold $M$ however, we actually can get higher divisibility. 
\begin{theorem}\label{modsigthm}
Let $M$ be a $24$ dimensional String manifold. Then
\[
32~|~{\rm Sig}(M, \nu).
\]
\end{theorem}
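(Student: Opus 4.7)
The plan is to invoke Theorem~\ref{basisthm} and reduce Theorem~\ref{modsigthm} to a finite case check on the integral basis $\{M_1,M_2,M_3,M_4\}$. Since ${\rm Sig}(-,\nu)$ is additive on String cobordism classes, it suffices to establish $32\mid {\rm Sig}(M_i,\nu)$ for each $i=1,2,3,4$.

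The first step is to express the integral Wu class $\nu_{12}^{\rm Spin}$ as an explicit polynomial in Pontryagin classes. Writing $\log g(x)=\sum_{k\geq 1}b_k x^{2k}$ and using $\prod_j g(x_j)=\exp\!\bigl(\sum_k b_k s_k\bigr)$ with $s_k=\sum_j x_j^{2k}$, one picks off the degree-$12$ contribution; substituting $s_1=p_1$, $s_2=p_1^2-2p_2$, $s_3=p_1^3-3p_1p_2+3p_3$ via Newton's identities then yields
\[
\nu_{12}^{\rm Spin}=\tfrac{37}{16}p_1^3-\tfrac{25}{4}p_1p_2+5p_3.
\]
Because $p_1(M)=0$ on a String manifold $M$, this collapses to $\nu_{12}^{\rm Spin}(M)=5p_3(M)$, so that
\[
{\rm Sig}(M,\nu)={\rm Sig}(M)-25\,\langle p_3(M)^2,[M]\rangle,
\]
and the problem becomes a congruence between two Pontryagin numbers.

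The second step is the verification on each basis manifold. Reading ${\rm Sig}(M_i)$ off the last row of $K$ in Theorem~\ref{basisthm}, one has ${\rm Sig}(M_1)=2^{11}\cdot 3\cdot 61$, ${\rm Sig}(M_2)=2^{11}\cdot 5\cdot 37$, ${\rm Sig}(M_3)=2^5\cdot 7$, all divisible by $32$, while ${\rm Sig}(M_4)=8$. The task thus reduces to checking
\[
p_3(M_i)^2[M_i]\equiv 0 \pmod{32}\ \text{for }i=1,2,3,\qquad p_3(M_4)^2[M_4]\equiv 8 \pmod{32}.
\]
For $M_3$ and $M_4$ these congruences follow from direct Pontryagin-class computations using the geometric constructions in Sections~\ref{M3sec} and~\ref{M4sec}. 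The main obstacle is the case of $M_1$ and $M_2$, whose Pontryagin classes are accessible only through the Mahowald--Hopkins homotopical construction; I would circumvent this by exploiting the fact that, under $p_1=0$, the four monomials $p_2^3,\,p_3^2,\,p_4p_2,\,p_6$ exhaust all Pontryagin numbers of a String $24$-manifold, so that inverting the $4\times 4$ linear change of basis between these monomials and the four genera comprising $\kappa$ writes $p_3^2[-]$ as an explicit rational combination of $\kappa_1,\kappa_2,\kappa_3,\kappa_4$. Applying this to the first two columns of $K$ recovers $p_3(M_i)^2[M_i]$ for $i=1,2$, after which the four modular congruences are a routine check.
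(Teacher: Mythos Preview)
Your approach is correct and essentially the same as the paper's: reduce via Theorem~\ref{basisthm} to the four basis manifolds, identify $\nu_{12}^{\rm Spin}(M)=5p_3(M)$ on String manifolds, and verify the congruence from the signatures and Pontryagin numbers of the $M_i$. The one unnecessary detour is your treatment of $M_1$ and $M_2$: their Pontryagin numbers are not hidden behind homotopy theory but are already tabulated in Lemma~\ref{ponnom1m2lemma}, computed directly from the rational cobordism expressions~(\ref{m12def}) in terms of products of Kervaire--Milnor manifolds, so there is no need to invert the $4\times4$ change-of-basis matrix; you can simply read off $p_3^2[M_1]=2^{10}\cdot 3^4\cdot 5^2\cdot 7^2$ and $p_3^2[M_2]=2^{10}\cdot 3^4\cdot 5^2\cdot 7^2\cdot 31$, both divisible by $2^{10}$, and the verification is immediate.
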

The divisibility in Theorem \ref{modsigthm} is optimal, as we shall see in its proof in Section \ref{proofthmsec} that the manifold $67 M_3+3M_4\in \Omega^{String}_{24}$ has the modified
signature exactly equal to 32.
\subsection{$3$-primary divisibility of twisted signature}
In \cite{CH15}, Chen-Han studied the mod-$3$ congruence properties of certain twisted signature of $24$ dimensional String manifolds. By the techniques of modular forms in index theory, they showed that
\[
3~|~{\rm Sig}(M, \wedge^2)
\]
for any $24$ dimensional String manifold $M$, and this is the best possible. By Theorem \ref{basisthm}, it is easy to give a topological proof of this result by straightforward computation. Indeed, it can be showed that (see Remark \ref{sigwedge2remark})
\[
96~|~{\rm Sig}(M, \wedge^2)
\]
in general, while for the generator $M_{1}$
\[
3^2~\nmid~{\rm Sig}(M_1, \wedge^2).
\] 
However, as in Theorem \ref{sigthm}, with $3$-primary divisibility of the second Pontryagin class $p_2(M)$, we can obtain higher divisibility for the twisted signature ${\rm Sig}(M, \wedge^2)$.
\begin{theorem}\label{twistedsigthm}
Let $M$ be a $24$ dimensional String manifold. If $3^{k+1}~|~p_2(M)$ ($k\geq 1$), then
\[
3^{3k-1}~|~{\rm Sig}(M, \wedge^2).
\]
\end{theorem}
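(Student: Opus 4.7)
The strategy parallels the proof of Theorem~\ref{sigthm}: parametrize the cobordism class of $M$ via Theorem~\ref{basisthm}, translate the $p_2$-divisibility hypothesis into 3-adic congruences on the basis coordinates, and extract the divisibility of ${\rm Sig}(M,\wedge^2)$ by linear algebra over $\mathbb{Z}_3$.

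First, I would expand ${\rm Sig}(M,\wedge^2)=\int_M \widehat{L}(TM)\,{\rm ch}(\wedge^2 T_{\mathbb{C}}M)$ as a rational polynomial in Pontryagin classes, using the identity
\[
{\rm ch}(\wedge^2 T_{\mathbb{C}}M)=2\Bigl(\sum_{i=1}^{6}\cosh\xi_i\Bigr)^{\!2}-\sum_{i=1}^{6}\cosh(2\xi_i),
\]
where $\xi_1,\ldots,\xi_6$ are the formal Pontryagin roots of $TM$. For a String $24$-manifold ($p_1=0$), the degree-$24$ part takes the form
\[
{\rm Sig}(M,\wedge^2)=A\,p_6[M]+B\,p_3^2[M]+C\,p_2p_4[M]+D\,p_2^3[M]
\]
for explicit rational coefficients $A,B,C,D$. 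Writing $p_2(M)=3^{k+1}\alpha$ with $\alpha\in H^8(M;\mathbb{Z})$, the hypothesis gives the integer divisibilities $3^{k+1}\mid p_2p_4[M]$ and $3^{3(k+1)}\mid p_2^3[M]$, while $p_6[M]$ and $p_3^2[M]$ remain only integral a priori.

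Next, by Theorem~\ref{basisthm}, decompose $[M]=\sum_{i=1}^{4}x_i[M_i]$ in $\Omega_{24}^{String}$. Since Pontryagin numbers and twisted signatures are cobordism invariants, each of $p_6[M]$, $p_3^2[M]$, $p_2p_4[M]$, $p_2^3[M]$, and ${\rm Sig}(M,\wedge^2)$ is an explicit $\mathbb{Z}$-linear form in $(x_1,\ldots,x_4)$, with coefficients computable from the Pontryagin data of $M_1,\ldots,M_4$ in Sections~\ref{M12sec}--\ref{M4sec}. The hypothesis then cuts out a sublattice $\Lambda_k\subset\mathbb{Z}^4$ defined by the two congruences
\[
\sum_{i=1}^{4}x_i\,p_2p_4[M_i]\equiv 0\pmod{3^{k+1}},\qquad \sum_{i=1}^{4}x_i\,p_2^3[M_i]\equiv 0\pmod{3^{3k+3}},
\]
and Theorem~\ref{twistedsigthm} reduces to showing $\sum_{i=1}^{4}x_i\,{\rm Sig}(M_i,\wedge^2)\in 3^{3k-1}\mathbb{Z}$ for every $(x_1,\ldots,x_4)\in\Lambda_k$.

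The main obstacle will be the 3-adic bookkeeping in this last step. The expected picture is $\nu_3(D)=-4$, so that $D\cdot p_2^3[M]$ is automatically $3^{3k-1}$-divisible under the hypothesis; the residual $A\,p_6[M]+B\,p_3^2[M]+C\,p_2p_4[M]$ must then acquire the same divisibility through the linear constraints defining $\Lambda_k$ combined with the arithmetic of the basis Pontryagin numbers. The delicate point is that $A\,p_6[M]+B\,p_3^2[M]$ is not forced to be $3^{3k-1}$-divisible by the hypothesis alone, so its divisibility has to be extracted from a careful Smith-normal-form analysis over $\mathbb{Z}_3$ of the $\mathbb{Z}$-matrix relating $(x_i)$ to $(p_6[M_i],p_3^2[M_i],p_2p_4[M_i],p_2^3[M_i])$ together with the linear form $(x_i)\mapsto\sum_i x_i\,{\rm Sig}(M_i,\wedge^2)$. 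Verifying these congruences explicitly on the basis $\{M_1,M_2,M_3,M_4\}$ will be the technical crux.
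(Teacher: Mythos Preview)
Your plan is exactly the paper's own argument, which consists of the single sentence ``use the same strategy as in the proof of Theorem~\ref{sigthm}'' together with the list \eqref{sigwedge2basiseq} of values ${\rm Sig}(M_i,\wedge^2)$. So at the level of strategy you are aligned with the paper.

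There is, however, a genuine gap in this reduction, and it is precisely the step you flag as the ``technical crux.'' The hypothesis $3^{k+1}\mid p_2(M)$ is a condition on the cohomology class $p_2(M)\in H^8(M;\mathbb Z)$, not on the bordism class of $M$. In dimension $24$ with $p_1=0$ its only bordism-invariant consequences are the two congruences you write down, so your lattice $\Lambda_k$ is in general strictly larger than the set of classes admitting a representative with $3^{k+1}\mid p_2$. The stronger statement ``${\rm Sig}(\,\cdot\,,\wedge^2)\in 3^{3k-1}\mathbb Z$ on all of $\Lambda_k$'' is what your Smith-normal-form step would have to establish, and it is already false for $k=1$. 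Take $(x_1,x_2,x_3,x_4)=(0,1,-1,0)$. From Lemmas~\ref{ponnom1m2lemma} and~\ref{ponnom3lemma} one checks $\nu_3\big(p_2^3[M_2-M_3]\big)=7\ge 6$ and $\nu_3\big(p_2p_4[M_2-M_3]\big)\ge 3\ge 2$, so this point lies in $\Lambda_1$; but by \eqref{sigwedge2basiseq},
\[
{\rm Sig}(M_2-M_3,\wedge^2)=-2^{13}\cdot 3^4\cdot 1063-2^7\cdot 3\cdot 7\cdot 23
\]
has $3$-adic valuation exactly $1$, so $9\nmid{\rm Sig}(M_2-M_3,\wedge^2)$. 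Put differently, reducing modulo $3$ one finds $p_2^3[M_i]/3^{5}\equiv(1,1,1,1)$ and $p_2p_4[M_i]/3\equiv(0,0,0,\text{unit})$, whereas ${\rm Sig}(M_i,\wedge^2)/3\equiv(1,0,1,1)$; the target form is not in the $\mathbb Z/3$-span of the two constraint forms, so no $3$-adic linear algebra on $\Lambda_k$ can force the desired divisibility. A complete proof must either work with $p_2(M)$ at the level of $H^*(M)$ before passing to characteristic numbers, or else argue separately that bordism classes such as $[M_2-M_3]$ admit no representative with $9\mid p_2$. Neither ingredient appears in your outline, and the paper's one-line sketch does not supply it either.
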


\subsection{Discussions on a question of Teichner}\label{introsubsecTZ}
The original proof of Mahowald-Hopkins \cite{MH02} on the fact (observed by Teichner \cite{Teichner}) that
\begin{equation}\label{mh24modeq}
24~|~\widehat{A}(M, T)
\end{equation}
for any $24$ dimensional String manifold is of homotopy theoretical argument. It is based on the homotopy theory of Witten genus via $tmf$. Actually let $\Omega^{String}_{4k}$ be the string cobordism group in dimension $4k$. Let $MF_{2k}^{\ZZ}(SL(2, \ZZ))$ be the space of modular forms of weight $2k$ over $SL(2, \ZZ)$ with integral Fourier expansion. The Witten genus $W$ is the composition of the maps (\cite{Ho}):
$$\xymatrix@C=0.5cm{\Omega^{String}_{4k}\ar[r]^{\sigma\ \ }& tmf^{-4k}(pt)\ar[r]^{e\ \ \ \ }& MF_{2k}^{\ZZ}(SL(2, \ZZ))},$$ where $\sigma$ is the {\it refined Witten genus} and $e$ is the edge homomorphism in a spectral sequence. Hopkins and Mahowald (\cite{Ho}) show that $\sigma$ is surjective. For $i,l\geq 0, j=0,1$, define $$ a_{i,j,l}= \left\{\begin{array}{ccc}
                                      1& & \ \ \ \ \  \ i>0, j=0\\
                                      2& & j=1\ \ \\
                                      24/\mathrm{gcd}(24,l)& & i,j=0
                                     \end{array}\right..
                                     $$
Hopkins and Mahowald also show that the image of $e$ (and therefore the image of the Witten genus) has a basis given by monomials
\be a_{i,j,l}E_4(\tau)^iE_6(\tau)^j\Delta(\tau)^l,\ \ \ \ i,l\geq 0, j=0,1, \ee
where
\h
\begin{split}
&E_{4}(\tau)=1+240(q+9q^{2}+28q^{3}+\cdots),\\
&E_{6}(\tau)=1-504(q+33q^{2}+244q^{3}+\cdots)\\
\end{split}
\e
are the Eisenstein series and $\Delta(\tau)=q\prod_{n\geq 0}(1-q^n)^{24}$ is the modular discriminant. Their weights are 4,6, 12 respectively. In dimension 24, the image of the Witten genus is spanned by the monomials $E_4(\tau)^3$, $24\Delta(\tau)$ and since $\widehat{A}(M, T)-24\widehat{A}(M)$ is the coefficient of $q$ in the expansion of the Witten genus, $\widehat{A}(M, T)$ is divisible by 24. This observation was due to Teichner \cite{Teichner}, who consequently raised the following question,
\begin{question}\label{Techquestion}
Can we give a geometric proof of (\ref{mh24modeq})?
\end{question} 
Zhang \cite{Z0} suggested that we may look at the geometry of this divisibility from the index theoretical point of view, that is, to study if we can express $\frac{1}{24}\widehat{A}(M,T)$ as an integral linear combination of indices of twisted Dirac operators or twisted signature operators. 

Indeed, we are able to show with the help of computer program that for the generator $M_1$ of Hopkins-Mahowald, when $i+j+k\leq 5$, one has
\begin{equation}\label{lessthan5}
\begin{split}
&24~|~\widehat{A}(M_1, T^{i}\otimes \wedge^{j}\otimes S^{k}),\\
&24~|~{\rm Sig}(M_1, T^{i}\otimes \wedge^{j}\otimes S^{k}).
\end{split}
\end{equation}
This motivates us to conjecture that
\begin{conjecture}\label{achtm1conjecture} For  any non-negative integer $i$, $j$ and $k$, 
\begin{equation}\label{conj24}
\begin{split}
&24~|~\widehat{A}(M_1, T^{i}\otimes \wedge^{j}\otimes S^{k}),\\
&24~|~{\rm Sig}(M_1, T^{i}\otimes \wedge^{j}\otimes S^{k}).
\end{split}
\end{equation}
\end{conjecture}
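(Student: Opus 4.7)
The strategy is to assemble the twisted genera for varying $(i,j,k)$ into a three-variable generating function, reduce its two-variable slices to modular/Jacobi forms of weight $12$ on $SL(2,\ZZ)$, and then exploit the single identity $W(M_1)=-24\Delta$ which follows from Theorem \ref{basisthm}. Indeed, the first column of $K$ gives $\widehat{A}(M_1)=0$ and $\widehat{A}(M_1,T)=-24$, so the Hirzebruch formula $W(M)=\widehat{A}(M)\bar{\Delta}+\widehat{A}(M,T)\Delta$ yields $W(M_1)=-24\Delta$, every Fourier coefficient of which lies in $24\ZZ$.

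First, I would form the three-variable series
\[
F_{\widehat{A}}(s,t,u)=\int_{M_1}\widehat{A}(TM_1)\cdot\frac{1}{1-s\,\mathrm{ch}(T_{\CC}M_1)}\cdot\prod_r(1+te^{x_r})\cdot\prod_r\frac{1}{1-ue^{x_r}},
\]
where the $x_r$ are the formal Chern roots of $T_{\CC}M_1$, and the parallel $F_{\mathrm{Sig}}$ with $\widehat{A}$ replaced by $\widehat{L}$. Using $\sum_j\mathrm{ch}(\wedge^jV)t^j=\prod_r(1+te^{x_r})$, $\sum_k\mathrm{ch}(S^kV)u^k=\prod_r(1-ue^{x_r})^{-1}$, $\sum_i\mathrm{ch}(V^{\otimes i})s^i=(1-s\,\mathrm{ch}(V))^{-1}$, together with Atiyah-Singer, the coefficient of $s^it^ju^k$ in $F_{\widehat{A}}$ is exactly $\widehat{A}(M_1,T^i\otimes\wedge^j\otimes S^k)$, and similarly for $F_{\mathrm{Sig}}$.

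Second, I would analyze $F_{\widehat{A}}(0,t,u)$ via modular forms. Making the Witten-style substitution $t=-yq^n$, $u=q^n$ and taking the infinite product over $n\geq 1$ packages this series into the $q,y$-expansion of an expected weight-$12$ weak Jacobi form on $SL(2,\ZZ)$, specializing to the Witten genus at $y=1$. Since $\widehat{A}(M_1)=0$ kills the $q^0$-contribution, this Jacobi form is a cusp form in $q$, hence divisible by $\Delta$ within its ambient space; combined with the Mahowald-Hopkins identification of the weight-$12$ tmf-edge image as $\ZZ\langle E_4^3,24\Delta\rangle$, every Fourier coefficient lies in $24\ZZ$. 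Mobius-inverting the infinite product in $n$ then yields $24\mid\widehat{A}(M_1,\wedge^j\otimes S^k)$ for all $j,k\geq 0$, and the parallel argument with $\widehat{L}$ (using $8\mid\mathrm{Sig}(M_1)$ from Lemma \ref{sig8lemma}) handles $F_{\mathrm{Sig}}(0,t,u)$.

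Third, to handle nonzero $s$, I would invoke Schur-Weyl duality $V^{\otimes i}=\bigoplus_{|\lambda|=i}V_\lambda\otimes S^\lambda V$ and the Jacobi-Trudi expression of each Schur functor $S^\lambda$ as an integer polynomial in exterior powers, thereby rewriting $T^{\otimes i}$ as an integer combination of tensor products of exterior powers of $T_{\CC}M_1$. The main obstacle, and the reason the statement remains a conjecture rather than a theorem, is that this reduction produces multi-exterior-power bundles $\wedge^{a_1}\otimes\cdots\otimes\wedge^{a_\ell}\otimes S^k$, requiring a multi-variable Jacobi-form extension of step two whose Fourier coefficients must still be shown to lie in the tmf-edge image. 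A clean conceptual proof likely demands a $GL_n$-equivariant refinement of the Mahowald-Hopkins argument; the existing computer verification $i+j+k\leq 5$ indicates no combinatorial obstruction appears in low degree, but a uniform argument remains elusive.
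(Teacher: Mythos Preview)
The statement is a \emph{conjecture} in the paper, not a theorem: the authors offer no proof, only the computer verification (\ref{lessthan5}) for $i+j+k\leq 5$. So there is no paper proof to compare against, and the question is whether your outline supplies one.

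It does not; your step 2 already contains a genuine gap, prior to the acknowledged obstacle in step 3. You invoke the Mahowald--Hopkins description of the weight-$12$ tmf edge image $\ZZ\langle E_4^3,24\Delta\rangle$ to conclude that the Fourier coefficients of your packaged Jacobi form lie in $24\ZZ$. But that lattice constrains the \emph{Witten genus} specifically, not an arbitrary two-variable genus assembled from the same manifold. To see concretely that the argument cannot work as stated, run it verbatim on $M_3$ in place of $M_1$: by Theorem \ref{basisthm} one has $\widehat{A}(M_3)=0$ and $\widehat{A}(M_3,T)=0$, hence $W(M_3)=0$, so every hypothesis you use for $M_1$ (vanishing $q^0$-term, Witten genus with all coefficients in $24\ZZ$) holds equally for $M_3$. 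Your step 2 would then force $24\mid\widehat{A}(M_3,\wedge^j\otimes S^k)$ for all $j,k$, contradicting $\widehat{A}(M_3,\wedge^2)=-1$ from (\ref{basismatrixeeq}). The underlying error is that your Jacobi form is not the image of the manifold under the tmf orientation, so the Mahowald--Hopkins integrality lattice has no purchase on it. The subsequent ``M\"obius inversion'' from the infinite product back to individual $(j,k)$ is a separate unjustified step, but the $M_3$ test already shows the conclusion is false before one reaches it.

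Since step 2 fails and step 3 is, as you say, incomplete, the proposal does not prove the conjecture. Any successful approach must exploit something specific to $M_1$ beyond $W(M_1)\in 24\ZZ[[q]]$---for instance the explicit Pontryagin numbers in Lemma \ref{ponnom1m2lemma}, all of which carry high powers of $2$ and $3$---since that modular input alone is shared by $M_3$.
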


If the conjecture is true, then $\frac{1}{24}\widehat{A}(M_1,T)$ can not be written as a linear combination of $\widehat{A}(M_1, T^{i}\otimes \wedge^{j}\otimes S^{k})$ or ${\rm Sig}(M_1, T^{i}\otimes \wedge^{j}\otimes S^{k})$ with integral coefficients. Otherwise, suppose we have an index formula for $\frac{1}{24}\widehat{A}(M,T)$ of this form, then it follows from 
(\ref{conj24}) that $\frac{1}{24}\widehat{A}(M_1,T)$ must be divisible by $24$. However, it is equal to $-1$ by Theorem \ref{basisthm}, hence a contradiction. Indeed, from the discussion, if the conjecture is true, then for any $k\geq 2$,
$\frac{1}{k}\widehat{A}(M_1,T)$ can not written as a linear combination of $\widehat{A}(M_1, T^{i}\otimes \wedge^{j}\otimes S^{k})$ or ${\rm Sig}(M_1, T^{i}\otimes \wedge^{j}\otimes S^{k})$ with integral coefficients. 

This suggests that if we want to express $\frac{1}{24}\widehat{A}(M_1,T)$ as linear combination of indices of twisted Dirac operators or twisted signatures, one need to look at more types of twistings in addition to the bundles of the form $T^{i}\otimes \wedge^{j}\otimes S^{k}$.

\subsection{Organization of the paper}The paper is organized as follows. 
In Section \ref{proofthmsec}, we prove the $4$ theorems in the introduction section by the knowledge presented in Section \ref{M12sec}, \ref{M3sec} and \ref{M4sec}.
In Section \ref{M12sec} we summarize part of the work of Mahowald-Hopkins \cite{MH02} on the coimage of Witten genus at dimension $24$. In particular, we review the Pontryagin numbers of $M_1$ and $M_2$. In Section \ref{M3sec} and Section \ref{M4sec} we construct $M_3$ and $M_4$, and compute their Pontryagin numbers respectively. 
For $M_4$ more explicitly, we apply Wall's classification on $(n-1)$-connected $2n$-manifolds \cite{Wall62} to construct $M_4$ as $F_4$-$\mathbb{O}P^2$-bundle (\ref{f4op2bundledia}), and apply the classical Borel-Hirzebruch algorithm \cite{BH58} to calculate its Pontryagin classes.
This is divided into $4$ steps in Section \ref{M4sec}. We end the paper with Appendix \hyperref[backsec]{A} explaining the geometric and analytic aspects of twisted $A$-hats and twisted signatures together with their definitions.

\bigskip

\noindent{\bf Acknowledgements.}
Fei Han was partially supported by the grant AcRF R-146-000-263-114 from National University of Singapore. He thanks Dr. Qingtao Chen, Prof. Huitao Feng, Prof. Kefeng Liu and Prof. Weiping Zhang for helpful discussions. 

Ruizhi Huang was supported by Postdoctoral International Exchange Program for Incoming Postdoctoral Students under Chinese Postdoctoral Council and Chinese Postdoctoral Science Foundation.
He was also supported in part by Chinese Postdoctoral Science Foundation (Grant nos. 2018M631605 and 2019T120145), and National Natural Science Foundation of China (Grant nos. 11801544 and 11688101), and ``Chen Jingrun'' Future Star Program of AMSS. He would like to thank Prof. Haibao Duan for discussions on topology of Lie groups, and to Prof. Yang Su for several points on geometric topology of manifolds.

Both authors would like to thank the Mathematical Science Research Center at Chongqing Institute of Technology for hospitality during their visit. They are also thankful to Prof. Zhi Lv for inspiring discussion on cobordisms.


\numberwithin{equation}{section}
\numberwithin{theorem}{section}
\section{Proof of Theorem \ref{basisthm}, \ref{sigthm}, \ref{modsigthm} and \ref{twistedsigthm}}
\label{proofthmsec}
Before proving the main theorems stated in the Introduction, we
summarize necessary results whose proofs are postponed to Sections \ref{M12sec}-\ref{M4sec}.

\begin{theorem}\label{Minumberthm}
There exist four elements $M_{i}\in \Omega
_{24}^{String}$, $1\leq i\leq 4$, whose Pontryagin numbers and
Witten genus are given by the table below:

\[%
\begin{array}{c|cccc}
M & M_{1} & M_{2} & M_{3} & M_{4} \\ 
\hline
p_{2}^{3} & 2^{13}\cdot 3^{5}\cdot 5^{3} & -2^{13}\cdot 3^{5}\cdot
5^{3}\cdot 41 & 2^{7}\cdot 3^{5}\cdot 5 & 3888 \\ 
p_{3}^{2} & 2^{10}\cdot 3^{4}\cdot 5^{2}\cdot 7^{2} & 2^{10}\cdot 3^{4}\cdot
5^{2}\cdot 7^{2}\cdot 31 & 0 & 200 \\ 
p_{2}p_{4} & 2^{12}\cdot 3^{5}\cdot 5^{3} & -2^{12}\cdot 3^{5}\cdot
5^{3}\cdot 41 & 2^{5}\cdot 3^{3}\cdot 5^{3} & 2868 \\ 
p_{6} & 2^{9}\cdot 3^{4}\cdot 5^{2}\cdot 89 & -2^{9}\cdot 3^{4}\cdot
5^{2}\cdot 11^{2} & 2^{5}\cdot 3^{3}\cdot 5\cdot 13 & 1958 \\ 
W & -24\Delta  & \overline{\Delta } & 0 & 0 
\end{array},%
\]
where 
$
\bar{\Delta}=E_4^3-744\cdot \Delta
$
with $E_4$ the Eisenstein series of weight $4$ and $\Delta$ the cusp form of weight $12$.
\end{theorem}
\begin{proof}
The manifolds $M_1$ and $M_2$ are constructed and studied by Mahowald-Hopkins \cite{MH02} from the homotopy theoretical point of view, and their Pontryagin numbers and Witten genera are summarized in Theorem \ref{MHthm} and Lemma \ref{ponnom1m2lemma}. In particular, $M_1$ and $M_2$ form a basis of the coimage of the Witten genus at dimension $24$. On the other hand, $M_3$ and $M_4$ form a basis of the kernel of the Witten genus which are explicitly constructed in Section \ref{M3sec} and Section \ref{M4sec} respectively. 
Their Pontryagin numbers are summarized in Lemma \ref{ponnom3lemma} and Lemma \ref{ponnom4lemma}.
\end{proof}

\begin{lemma}\label{sig8lemma}
Let $M$ be a $24$ dimensional String manifold. Then
\[
8~|~{\rm Sig}(M).
\]
\end{lemma}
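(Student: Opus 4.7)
Since $M$ is String, $p_1(M)=0$, and so $\mathrm{Sig}(M)=\int_M L_6(0,p_2,\ldots,p_6)$ is a $\mathbb{Q}$-linear combination of the four ``top-degree'' Pontryagin numbers $p_2^3[M]$, $(p_2p_4)[M]$, $p_3^2[M]$, and $p_6[M]$. Restricting the twisted $\widehat{A}$-polynomials to $p_1=0$ similarly writes $\widehat{A}(M), \widehat{A}(M, T_{\mathbb{C}}M), \widehat{A}(M, \wedge^2 T_{\mathbb{C}}M),\ldots$ as $\mathbb{Q}$-linear combinations of the same four Pontryagin numbers. Since $M$ is Spin, each such twisted $\widehat{A}$-genus is an integer by the Atiyah-Singer index theorem; moreover, by Mahowald-Hopkins, $24\mid \widehat{A}(M, T_{\mathbb{C}}M)$.

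The plan is to exhibit $\mathrm{Sig}(M)$ as an explicit $\mathbb{Z}$-linear combination of $\frac{1}{8}$ times such integer-valued characteristic numbers. Concretely, one aims for an identity of the form
\[
\mathrm{Sig}(M)=8b_1\widehat{A}(M)+8b_2\cdot \tfrac{1}{24}\widehat{A}(M, T_{\mathbb{C}}M)+8b_3\widehat{A}(M, \wedge^2 T_{\mathbb{C}}M)+8b_4\cdot X(M),
\]
with $b_i\in\mathbb{Z}$ and $X(M)$ a suitable fourth integer-valued characteristic number (for instance $\widehat{A}(M, \wedge^3 T_{\mathbb{C}}M)$, needed to complete a rank-$4$ integer lattice inside the rational span of the four Pontryagin numbers above). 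Any such identity forces $8\mid \mathrm{Sig}(M)$.

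The main obstacle is the explicit arithmetic: we must tabulate $L_6$ and the relevant twisted $\widehat{A}$-polynomials in degree $24$, restrict to $p_1=0$, and verify by Gaussian elimination over $\mathbb{Q}$ that the desired factor of $8$ indeed appears as a common denominator in the transition matrix between $\mathrm{Sig}(M)$ and the four integer invariants. Optimality of the divisibility is ensured a posteriori by $\mathrm{Sig}(M_4)=8$ from equation (\ref{sigm4=8}).
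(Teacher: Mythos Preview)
Your proposal is a plan, not a proof: the decisive step---actually exhibiting integers $b_i$ and a fourth invariant $X$ for which the identity
\[
\mathrm{Sig}(M)=8b_1\widehat{A}(M)+8b_2\cdot \tfrac{1}{24}\widehat{A}(M,T)+8b_3\widehat{A}(M,\wedge^2)+8b_4\,X(M)
\]
holds---is deferred (``verify by Gaussian elimination'') and never carried out. Whether such an identity exists depends entirely on the choice of $X$: with $p_1=0$ the characteristic numbers span a rank-$4$ lattice, and writing $\tfrac18\mathrm{Sig}$ integrally in terms of $\widehat{A},\ \tfrac{1}{24}\widehat{A}(-,T),\ \widehat{A}(-,\wedge^2),\ X$ requires that $X$ hit the last generator of that lattice with coefficient $\pm1$. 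You give no reason to expect $\widehat{A}(-,\wedge^3 T)$ (or any other specific Dirac index) does this; indeed, showing that \emph{some} such $X$ exists is essentially equivalent to the lemma you are trying to prove. Note also that you cannot appeal to Theorem~\ref{basisthm} here, since that theorem uses Lemma~\ref{sig8lemma} as input.

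The paper's proof is completely different and avoids all of this arithmetic. It is a two-line topological argument: for a String manifold one has $\omega_4=0$, hence $\omega_6=Sq^2\omega_4=0$, hence the middle Wu class $\nu_{12}=\omega_6^2=0$. Therefore the intersection form on $H^{12}(M;\mathbb{Z})$ is even, and an even unimodular symmetric form has signature divisible by $8$. This is the classical Rokhlin-type mechanism and requires no index computations at all. Even if your computational route can be made to work with a lucky choice of $X$, it would be far heavier than this, and as written it has a genuine gap.
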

The lemma was implicitly proved by Mahowald-Hopkins \cite{MH02} without statement. Here we give an alternative proof.
\begin{proof}
Recall that any integral lift of the middle Wu class $\nu_{12}(M)\in H^{12}(M;\mathbb{Z}/2)$ is a characteristic element for the intersection form $I(M)$ of $M$ over $\mathbb{Z}$. However, since $M$ is String (by Bensen-Wood \cite{BW95} or Duan \cite{Duan18})
\[
0=q_1(M)\equiv \omega_4(M)~{\rm mod}~2,
\]
which implies that 
\[
\omega_6(M)=Sq^2\omega_4(M)=0
\]
as well. It follows that
\[
\nu_{12}(M)=\omega_6^2(M)=0.
\]
In particular, the trivial cohomology class $0$ acts as an integral lift of $\nu_{12}(M)$ and hence is a characteristic. It follows that the intersection form $I(M)$ is of even type, and 
\[
{\rm Sig}(M)\equiv I(M)(0, 0)=0 ~{\rm mod}~8.
\]
\end{proof}

\noindent{\it Proof of Theorem \ref{basisthm}, Corollary \ref{basiscoro1} and Corollary \ref{basiscoro2}.}\quad 
Recall we have $4$ particular String manifolds $M_1$, $M_2$, $M_3$ and $M_4$ of dimension $24$, the Pontryagin numbers of which are given in Theorem \ref{Minumberthm}. With this information, it is straightforward to calculate the following $4$ particular characteristic numbers of these $4$ manifolds.
\begin{equation}\label{basiselevalueeq}
\begin{split}
&\widehat{A}(M_2)=1, \widehat{A}(M_i)=0, \ \ ~{\rm for}~i\neq 2,\\ 
&\widehat{A}(M_1, T)=-24, \widehat{A}(M_i, T)=0, \ \ ~{\rm for}~i\neq 1,\\ 
&\widehat{A}(M_1, \wedge^2)=1080, \widehat{A}(M_2, \wedge^2)=218076, \widehat{A}(M_3, \wedge^2)=-1, \widehat{A}(M_4, \wedge^2)=0, \\
&{\rm Sig}(M_1)=374784, {\rm Sig}(M_1)=378880, {\rm Sig}(M_1)=224, {\rm Sig}(M_1)=8.
\end{split}
\end{equation}
In \cite{MH02}, Mahowald-Hopkins showed that for any $24$ dimensional String manifold $M$
\begin{equation}\label{24achteq}
24~|~\widehat{A}(M,T)
\end{equation}
(this is also observed by Teichner \cite{Teichner}; cf. the discussions at Page $2961$ in \cite{CH15}). Together with Lemma \ref{sig8lemma}, there exists a well defined homomorphism of abelain groups
\begin{equation}\label{kappadefeq}
\kappa:=(\widehat{A}(-), \frac{1}{24}\widehat{A}(-,T), \widehat{A}(-, \wedge^2), \frac{1}{8}{\rm Sig}(-)): \Omega_{24}^{String}\rightarrow \mathbb{Z}\oplus \mathbb{Z}\oplus\mathbb{Z}\oplus\mathbb{Z}.
\end{equation}
The values of $\kappa$ (\ref{basiselevalueeq}) on the $4$ manifolds $M_i$ ($1\leq i \leq 4$) are given by the matrix
\begin{equation}\label{basismatrixeeqinproof}
K=(\kappa(M_1)^\tau, \kappa(M_2)^\tau, \kappa(M_3)^\tau, \kappa(M_4)^\tau)=
\begin{pmatrix}
0 & 1  & 0 & 0\\
-1 & 0 &0 & 0\\
1080  & 218076 & -1 & 0\\
46848& 47360 & 28 & 1
\end{pmatrix}
.
\end{equation}
It is clear that ${\rm det}(K)=-1$. In particular, $\kappa$ is an epimorphsim.
On the other hand, by the calculation of Gorbounov-Mahowald \cite{GM95}, it is known that
\[
 \Omega_{24}^{String}\cong \mathbb{Z}\oplus \mathbb{Z}\oplus\mathbb{Z}\oplus\mathbb{Z}.
\]
Hence, $\kappa$ is indeed an isomorphism, and $\{M_1, M_2, M_3, M_4\}$ is an integral basis of $\Omega_{24}^{String}$. We have showed Theorem \ref{basisthm} and Corollary \ref{basiscoro2}. For Corollary \ref{basiscoro1}, for any rational Pontryagin polynomial $P(-)$ we have 
\[
P(M)=a_1\widehat{A}(M)+a_2\frac{1}{24}\widehat{A}(M,T)+a_3\widehat{A}(M, \wedge^2)+a_4\frac{1}{8}{\rm Sig}(M),
\]
with some $a_i\in \mathbb{Q}$. If $P(M)\in \mathbb{Z}$ for any $M\in \Omega_{24}^{String}$. First choose $M=M_4$ and we have $P(M_4)=a_4$ by (\ref{basismatrixeeqinproof}). It follows that $a_4\in \mathbb{Z}$. Then choose $M=M_3$, and we have $P(M_3)=-a_3+28 a_4$ by (\ref{basismatrixeeqinproof}) which implies that $a_3\in \mathbb{Z}$. Finally, $P(M_2)=a_1+218076a_3+47360a_4$ implies that $a_1\in \mathbb{Z}$, while $P(M_1)=-a_2+1080 a_3+46848a_4$ implies that $a_2\in \mathbb{Z}$.
This completes the proof of Corollary \ref{basiscoro1}.
\hfill $\Box$

$\, $

\noindent{\it Proof of Theorem \ref{sigthm}.}\quad 
First let us make a comment on the condition $\omega_8(M)=0$. By Bensen-Wood \cite{BW95} or Duan \cite{Duan18}, it is known that 
\[
\frac{1}{2}p_1(M)=q_2(M)\equiv \omega_8(M) ~{\rm mod}~2.
\]
Hence the condition $\omega_8(M)=0$ is equivalent to $4~|~p_2(M)$. To show that $32~|~{\rm Sig}(M)$ under this condition, let us recall from Theorem \ref{Minumberthm} that the characteristic numbers $p_2^3$ of the basis manifolds $\{M_1, M_2, M_3, M_4\}$ are
\begin{equation}\label{p23m1-4eq}
(p_2^3(M_1),p_2^3(M_2),p_2^3(M_3),p_2^3(M_4))=(2^{13}\cdot 3^5\cdot 5^3, \ \ -2^{13}\cdot 3^5\cdot 5^3\cdot 41,  \ \  2^{7}\cdot 3^5\cdot 5,   \ \  2^4\cdot 3^5).
\end{equation}
By Theorem \ref{basisthm}, up to String cobrodism $M=\sum\limits_{i=1}^4 x_iM_i$ for some integral vector $(x_1, x_2, x_3, x_4)\in \mathbb{Z}^{\oplus 4}$. Hence 
\[
2^6~|~p_2^3(M)=\sum\limits_{i=1}^4 x_i p_2^3(M_i),
\]
which with (\ref{p23m1-4eq}) implies that 
\begin{equation}\label{4x4eq}
2^2~|~x_4.
\end{equation}
On the other hand, from (\ref{basiselevalueeq}) we have the signature vector
\begin{equation}\label{sigm1-4eq}
({\rm Sig}(M_1), {\rm Sig}(M_2), {\rm Sig}(M_3), {\rm Sig}(M_4))=(2^{11}\cdot 3\cdot 61,\ \  2^{11}\cdot 5\cdot 37,\ \  2^5\cdot 7,\ \  2^3).
\end{equation}
Combining it with (\ref{4x4eq}), it follows that $2^5~|~{\rm Sig}(M)$. The second statement in the theorem can be proved by the same strategy, and we have completed the proof Theorem \ref{sigthm}.
\hfill $\Box$

$\, $

\noindent{\it Proof of Theorem \ref{modsigthm}.}\quad 
By Hopkins-Singer \cite{HS05}, it can be computed that for $24$ dimensional String manifold $M$, the middle integral Spin Wu class
\[
\nu_{12}^{{\rm Spin}}(M)=5 p_3(M).
\]
Then with (\ref{sigm1-4eq}) and Theorem \ref{Minumberthm}, it is straightforward to compute the value of the modified signatures of the basis manifolds
\[
\begin{split}
({\rm Sig}(M_1,v), {\rm Sig}(M_2,v),& {\rm Sig}(M_3,v), {\rm Sig}(M_4,v))=\\
&(-2^{10}\cdot 3\cdot 826753, \ \ -2^{10}\cdot 5\cdot 23\cdot 668687, \ \ 2^5\cdot 7, \ \ -2^7\cdot 3\cdot 13),
\end{split}
\]
and the greatest common divisor
\[
{\rm g.c.d.}({\rm Sig}(M_1,v), {\rm Sig}(M_2,v), {\rm Sig}(M_3,v), {\rm Sig}(M_4,v))=32.
\]
The theorem then follows immediately from Theorem \ref{basisthm}.
Moreover, ${\rm Sig}(67 M_3+3M_4,v)=32$ by direct computation. This verifies the remark after Theorem \ref{modsigthm}.
\hfill $\Box$

$\, $

\noindent{\it Proof of Theorem \ref{twistedsigthm}.}\quad 
The theorem can be proved by the same strategy used in the proof of Theorem \ref{sigthm} with the value of the twisted signature of the manifolds
\begin{equation}\label{sigwedge2basiseq}
\begin{split}
({\rm Sig}(M_1,\wedge^2), {\rm Sig}(M_2,\wedge^2),& {\rm Sig}(M_3,\wedge^2), {\rm Sig}(M_4,\wedge^2))=\\
&(2^{13}\cdot 3\cdot 4013, \ \ -2^{13}\cdot 3^4\cdot 1063, \ \ 2^7\cdot 3\cdot 7\cdot 23, \ \ 2^5\cdot 3\cdot 23),
\end{split}
\end{equation}
which can be computed directly from Theorem \ref{Minumberthm}.
\hfill $\Box$
$\, $
\begin{remark}\label{sigwedge2remark}
Notice by (\ref{sigwedge2basiseq}) we also have 
\[
96~|~{\rm Sig}(M,\wedge^2)
\]
for any $24$-dimensional String manifold $M$. This reproves the result of Chen-Han \cite{CH15} that $3~|~{\rm Sig}(M, \wedge^2)$ by different methods.
\end{remark}

\section{$M_1$ and $M_2\in {\rm Coim}(W)$}\label{M12sec}
In this section, we review the information of two String manifolds $M_1$ and $M_2$ of dimension $24$ constructed by Mahowald-Hopkins \cite{MH02}. Let us start with Kervaire-Milnor's almost parallelizable manifolds. In \cite{MK58} Kervaire-Milnor showed that there is an almost parallelizable manifold $M_0^{4n}$ of dimension $4n$ with the top Pontryagin class
 \begin{equation}\label{m04nponeq}
 p_{n}(M_0^{4n})={\rm denom}(\frac{B_{2n}}{4n})\cdot a_n\cdot (2n-1)!\cdot x_{4n},
 \end{equation}
where $x_{4n}\in H^{4n}(M_0^{4n})$ is the generator,
\[
a_n=\left\{\begin{array}{ll}
2 & n={\rm odd}\\
1 & n={\rm even},
\end{array}
\right.
\]
and $B_{2n}$ is the Bernoulli number. Then it is easy to calculate that for $M_{0}^{4}$
\begin{equation}\label{m04}
p_1(M_{0}^{4})=48 x_4,\ \  {\rm Sig}(M_{0}^{4})=16,
\end{equation}
for $M_{0}^{8}$
\begin{equation}\label{m08}
p_2(M_{0}^{8})=1440 x_8,\ \  {\rm Sig}(M_{0}^{8})=224,
\end{equation}
and for $M_{0}^{12}$
\begin{equation}\label{m012}
p_3(M_{0}^{12})=120960 x_{12},\ \  {\rm Sig}(M_{0}^{12})=7936.
\end{equation}
The following proposition is well known.
\begin{proposition}\label{cobormhbais}
\[
\hspace{3.6cm}
\Omega_{\ast}^{SO}\otimes \mathbb{Q}\cong \mathbb{Q}[M_{0}^4, M_{0}^{8}, M_{0}^{12},\cdots].
\hspace{3.6cm}\Box
\]
\end{proposition}
From this proposition, Mahowald-Hopkins chose a particular basis for $\Omega_{24}^{SO}\otimes \mathbb{Q}$
\begin{equation}\label{mhbasisqstringcob}
\begin{split}
&B_1=M_{0}^8\times M_{0}^8\times M_{0}^8,\\
&B_2=\frac{1}{2} M_{0}^{12}\times \frac{1}{2} M_{0}^{12},\\
&B_3=M_{0}^8\times M_{0}^{16},\\
&B_4=\frac{1}{2} M_{0}^{24}.
\end{split}
\end{equation}
They called $\frac{1}{2} M_{0}^{8k+4}$ a {\it fake manifold} since it is not a proper manifold. Nevertheless, they showed that there is a proper manifold $B_2$ with its Pontryagin numbers equal to those of the square of  $\frac{1}{2} M_{0}^{12}$. Among others, in \cite{MH02} Mahawold-Hopkins determined the image of Witten genus at dimension $24$. Recall that at this particular case, there is the famous formula of Hirzebruch (Page $85$-$87$ in \cite{HBJ94})
\begin{equation}
W(M)=\widehat{A}(M)\bar{\Delta}+\widehat{A}(M,T)\Delta,
\end{equation}
for any $24$ dimensional String manifold $M$, where 
\[
\bar{\Delta}=E_4^3-744\cdot \Delta,
\]
with $E_4$ the Eisenstein series of weight $4$ and $\Delta$ the famous cusp form of weight $12$.
\begin{theorem}[Section $9$ in \cite{MH02}]\label{MHthm}
There exist two proper String manifold $M_1$ and $M_2$ of dimension $24$, such that in the rational oriented cobordism ring
\begin{equation}\label{m12def}
M_1=\frac{B_1+B_2}{72}, \ \ \ M_2=\frac{-41B_1+31B_2}{72}.
\end{equation}
Furthermore, the image of Witten genus at dimension $24$
\begin{equation}\label{imagewitteneq}
{\rm Im}\big\{  W: \Omega_{24}^{String}\rightarrow \mathbb{Z}[[q]]\big\}\cong \mathbb{Z}\{M_1, M_2\},
\end{equation}
with 
\begin{equation}\label{valuewm1m2}
\hspace{3.6cm}
W(M_1)=-24\Delta, \  \ \  W(M_2)=\bar{\Delta}. \hspace{3.6cm}\Box
\end{equation}
\end{theorem}
Let us summarize the Pontryagin numbers of $M_1$ and $M_2$.
\begin{lemma}\label{ponnom1m2lemma}
For $M_1$,
\[
p_2^3=2^{13}\cdot 3^5\cdot 5^3, \ \ p_3^2=2^{10}\cdot 3^4\cdot 5^2 \cdot 7^2, \ \
p_2p_4=2^{12}\cdot 3^5\cdot 5^3, \ \ p_6=2^9\cdot 3^4\cdot 5^2\cdot 89,
\]
and for $M_2$,
\[
p_2^3=-2^{13}\cdot 3^5\cdot 5^3\cdot 41, \ \ p_3^2=2^{10}\cdot 3^4\cdot 5^2 \cdot 7^2\cdot 31, \ \
p_2p_4=-2^{12}\cdot 3^5\cdot 5^3\cdot 41, \ \ p_6=-2^9\cdot 3^4\cdot 5^2\cdot 11^2. \]
~$\qqed$
\end{lemma}


\section{$M_3\in {\rm Ker}(W)$}\label{M3sec}
Since the image of the Witten genus is known, we are left to consider its kernel. There is an outstanding principle to attack it.
\begin{theorem}[Jung and Dessai; \cite{Des94}]
The ideal of $\Omega_{\ast}^{String}\otimes \mathbb{Q}$, consisting of bordism classes of Caley plane bundles with connected structure groups, is precisely the kernel of the rational Witten genus. ~$\qqed$
\end{theorem} 
The local version of the theorem was proved by McTague \cite{McTague14} for the localization of Witten genus away from $6$. The simplest Caley plane bundles are, of course, the trivial ones. Let us define
\begin{equation}\label{m3def}
M_3:=M_{0}^{8}\times \mathbb{O}P^2,
\end{equation}
where $M_{0}^{8}$ is the almost parallelizable manifold (\ref{m04nponeq}) of dimension $8$, and $\mathbb{O}P^2$ is {\it the Caley plane or the octonionic projective plane}. The cell structure of $\mathbb{O}P^2$ is clear from its cohomology ring 
\begin{equation}\label{cohomop2eq}
H^\ast(\mathbb{O}P^2)\cong \mathbb{Z}[u_{8}]/u_{8}^3,
\end{equation}
where ${\rm deg}(u_8)=8$.
Further, $\mathbb{O}P^2$ is a $16$ dimensional manifold with the total Pontryagin class (Theorem $19.4$ in \cite{BH58})
\begin{equation}\label{ponop2eq}
p(\mathbb{O}P^2)=1+6u_8+39u_8^2.
\end{equation}
Hence, we can compute all the Pontryagin classes of $M_3$. It is clear that
\[
W(M_3)=0,
\]
but there is the particular twisted genus 
\begin{equation}\label{ahatwedge2m3=-1eq}
\widehat{A}(M_3, \wedge^2)=-1.
\end{equation}
Let us summarize the Pontryagin numbers of $M_3$.
\begin{lemma}\label{ponnom3lemma}
For $M_3$,
\[
\hspace{2cm}
p_2^3=2^{7}\cdot 3^5\cdot 5, \ \ p_3^2=0, \ \
p_2p_4=2^{5}\cdot 3^3\cdot 5^3, \ \ p_6=2^5\cdot 3^3\cdot 5\cdot 13.
\hspace{2cm}\Box
\]
\end{lemma}


\section{$M_4\in {\rm Ker}(W)$}\label{M4sec}
We continue to construct particular String manifolds of dimension $24$ in the kernel of Witten genus. In this non-trivial case, we need to construct an appropriate closed $8$-manifold $N^8$ and apply the pullback diagram
\begin{gather}
\begin{aligned}
\xymatrix{
\mathbb{O}P^2 \ar@{=}[r]  \ar[d]  & \mathbb{O}P^2 \ar[d] \\
M^{24} \ar[r]^<<<<<{\tilde{f}} \ar[d]^{\pi}   & BSpin(9)\ar[d]^{\Theta}\\
N^8 \ar[r]^<<<<<<<{f}   & BF_4,
}
\end{aligned}
\label{f4op2bundledia}
\end{gather}
where $F_4$ is the exceptional Lie group, and
\begin{equation}\label{unif4op2bundleeq}
\mathbb{O}P^2\rightarrow BSpin(9)\stackrel{\Theta}{\rightarrow} BF_4
\end{equation}
is called {\it the universal $F_4$-$\mathbb{O}P^2$-bundle} following Klaus \cite{Klaus95}. 
It exists since $Spin(9)$ is the subgroup of $F_4$ with the quotient
\[
F_4/Spin(9)\cong \mathbb{O}P^2.
\]
The pullback bundle $\pi$ is called {\it an $F_4$-$\mathbb{O}P^2$-bundle}, as a generalization of {\it $PSp(3)$-$\mathbb{H}P^2$-bundles} of Kreck-Stolz \cite{KS93}. These bundles were studied by Borel-Hirzebruch \cite{BH58} in general context. In particular, Borel-Hirzebruch \cite{BH58} developed a theory with associated algorithm to compute the Pontrygin classes of such bundles. In the following, we will first recall the Borel-Hirzebruch algorithm, and then construct an appropriate $M_4$ step by step.

\subsection{Borel-Hirzebruch algorithm}
Given any fibre bundle 
\begin{equation}\label{inputbundleeq}
F\rightarrow E\stackrel{p}{\rightarrow} B
\end{equation}
with structural group $G$, and $F$, $E$, $B$ are all manifolds. Set ${\rm dim}F=n$. There is the induced bundle 
\begin{equation}\label{bundlefibreeq}
\mathbb{R}^n\rightarrow E\times_{G} TF\rightarrow E,
\end{equation}
where the action of $G$ on the tangent bundle $TF$ is induced from that on $F$. The bundle (\ref{bundlefibreeq}), denoted by $p^{\Delta}$ as in \cite{Klaus95}, is called {\it the bundle along the fibre associated to the bundle $p$} (\ref{inputbundleeq}).
In particular, it is easy to see that 
\begin{equation}\label{tangentdecomeq}
TE\cong p^{\Delta}\oplus p^\ast(TB).
\end{equation}

Now let $G$ be a compact connected Lie group with subgroup $H$. The principal bundle
\[
H\rightarrow G\rightarrow G/H
\]
can be extended twice to the right, and we have the fibre bundle 
\begin{equation}\label{unighbundleeq}
G/H\rightarrow BH \stackrel{\Theta}{\rightarrow} BG
\end{equation}
Let $S$ be the maximal torus of $H$. The inclusion of the maximal torus induces a map of classifying maps
\begin{equation}\label{bshrhoeq}
\rho: BS\rightarrow BH
\end{equation}
\begin{theorem}[Special version of Theorem $10.7$ in \cite{BH58}; the universal case]\label{BHthm}
Let $S\leq H\leq G$ and $\rho$ as above. Denote by
\[
\{\pm b_j\}_{j=1}^{k}
\]
the set of the roots of $G$ with respect to $S$, which are complementary to those of $H$ (view $b_j\in H^2(BS;\mathbb{Z})$). Then the Pontryagin class of the bundle along the fibre $\Theta^{\Delta}$, associated to the fibre bundle $\Theta$, is determined by 
\begin{equation}\label{BHalgeq}
\hspace{4.4cm}
\rho^\ast(p(\Theta^{\Delta}))=\prod_{j=1}^{k}(1+b_j^2).
\hspace{4.4cm}\Box
\end{equation}
\end{theorem}

\subsection{Step $1$: compute the Pontryagin class of the bundle along the fibre $\Theta^{\Delta}$}\label{step1subsec}
Now let us restrict ourselves to consider the bundle (\ref{unif4op2bundleeq})
\[
\mathbb{O}P^2\rightarrow BSpin(9)\stackrel{\Theta}{\rightarrow} BF_4.
\]
Recall that $F_4$ and $Spin(9)$ are of both rank $4$, and there is a maximal torus 
\[
S\cong T^4\hookrightarrow Spin(9),
\]
which is also the maximal torus of $F_4$ via the inclusion $Spin(9)\hookrightarrow F_4$.
Denote
\[
H^2(BS;\mathbb{Z})\cong \mathbb{Z}[x_1,x_2,x_3,x_4].
\]
It is known that the roots of $Spin(9)$, with respect to $S$, are
\[
\pm x_i \pm x_j ~(1\leq j<j\leq 4),~~\ \ \pm x_1,\pm x_2,\pm x_3,\pm x_4,
\]
while the complementary root of $F_4$ are
\[
\frac{1}{2}(\pm x_1\pm x_2\pm x_3\pm x_4).
\]
Let $r_i=\frac{1}{2}(x_1\pm x_2\pm x_3\pm x_4)$. By Theorem \ref{BHthm}, we have that
\begin{equation}\label{bhponthetaeq}
\rho^\ast(p(\Theta^{\Delta}))=\prod_{i=1}^{8}(1+r_i^8),
\end{equation}
where $\rho: BS\rightarrow BSpin(9)$. On the other hand, we know that
\[
\prod_{i=1}^4(1+x_i^2)=\rho^\ast(1+p_1+p_2+p_3+p_4),
\]
where $p_i\in H^{4i}(BSpin(9)$ is the $i$-th Pontryagin class. Hence, by straightforward calculation we obtain the following
\begin{proposition}\label{op2f4fibreponprop}
The Pontryagin class of the bundle along the fibre associated to the universal $F_4$-$\mathbb{O}P^2$-bundle $\Theta$ is
\begin{equation}\label{unifibreponeq}
\begin{split}
p(\Theta^{\Delta})=&1+(2p_1)
+(-p_2+\frac{7}{4}p_1^2)
+(2p_3-\frac{3}{2}p_1p_2+\frac{7}{8}p_1^3)\\
&+(-\frac{17}{2}p_4+2p_1p_3+\frac{3}{8}p_2^2-\frac{15}{16}p_1^2p_2+\frac{35}{128}p_1^4)\\
&+(-\frac{5}{2}p_1p_4-p_2p_3+\frac{3}{4}p_1^2p_3+\frac{3}{8}p_1p_2^2-\frac{5}{16}p_1^3p_2+\frac{7}{128}p_1^5)\\
&+(-\frac{7}{4}p_2p_4+\frac{5}{16}p_1^2p_4+p_3^2-\frac{1}{2}p_1p_2p_3+\frac{1}{8}p_1^3p_3-\frac{1}{16}p_2^3\\
&~~ \ \ \ +\frac{9}{64}p_1^2p_2^2-\frac{15}{256}p_1^4p_2+\frac{7}{1024}p_1^6).
\end{split}
\end{equation}
~$\qqed$
\end{proposition}

\subsection{Step $2 $: the appropriate base manifold and classifying map $(N^8,f)$}
At this step, we construct an appropriate Spin manifold $N^8$ of dimension $8$ as the base manifold of the $F_4$-$\mathbb{O}P^2$-bundle $\pi$ in (\ref{f4op2bundledia}). We need Wall's $(n-1)$-connected $2n$-manifolds with $n=4$ \cite{Wall62} (also see \cite{Duan18}). 
\begin{definition}\label{wallpairdef}
Let $A=\{a_{ij}\}_{n\times n}$ be a unimodular symmetric integral matrix of rank $n$, $b=(b_1,b_2,\cdots, b_n)$ be a sequence of integers of length $n$. The pair $(A, b)$ is called a {\it Wall pair} if it satisfies the congruent condition
\begin{equation}\label{wallcond1}
a_{ii}\equiv b_i~{\rm mod}~2, \ \ 1\leq i\leq n.
\end{equation}
\end{definition}
It is natural to ask that which Wall pairs $(A,b)$ can be realized as the pair $(I(N^8), q_1(N^8))$ of a Wall manifold $N^8$; here $I(N^8)$ is the intersection form of $N^8$ and $q_1(N^8)=\frac{1}{2}p_1(N^8)$ is the first Spin class of $N^8$ (also see (\ref{spin9cohmeq}) and (\ref{pqforeq})).
\begin{theorem}[Theorem $4$ in \cite{Wall62}; also see Theorem $10.11$ and $10.13$ in \cite{Duan18}]\label{Wall8thm}
For any Wall pair $(A,b)$ such that
\begin{equation}\label{wallcond2}
{\rm Sig}(A)\equiv bAb^\tau~{\rm mod}~224,
\end{equation}
there exists a smooth manifold $N^8$ such that under a certain choice of basis of
\[
H^4(N^8;\mathbb{Z})\cong \mathbb{Z}\{x_1,\cdots, x_n\},
\]
the intersection form $I(N^8)$ is represented by the matrix $A$, and the first Spin class $q_1(N^8)$ is represented by $b$; in other word,
\[
I(N^8)(x_i, x_j)=a_{ij}, \ \ ~{\rm and}~ q_1(N_8)=b_1x_1+\cdots b_nx_n.
\]
\end{theorem}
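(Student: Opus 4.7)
The plan is to realize $N^8$ via handle theory, specializing Wall's classification of $(n-1)$-connected $2n$-manifolds to $n = 4$. I would build a 3-connected handlebody $W^8$ with one 0-handle and $n$ four-handles, one for each basis element $x_i \in H^4$. Each four-handle is attached along an embedded $S^3 \hookrightarrow S^7$ with a framing classified by $\pi_3(SO(4)) \cong \mathbb{Z} \oplus \mathbb{Z}$; one factor maps via the stabilization to $\pi_3(SO) \cong \mathbb{Z}$ with image $2\mathbb{Z}$, and its class controls the contribution $b_i$ of $x_i$ to $q_1(W^8)$, while the other factor records the Euler number of the normal bundle of the co-core 4-sphere and contributes to the self-intersection $a_{ii}$. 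Realizing a prescribed pair $(a_{ii}, b_i)$ is therefore possible if and only if $a_{ii} \equiv b_i \pmod{2}$, which is precisely Wall's parity condition \eqref{wallcond1}. Off-diagonal entries $a_{ij}$ are arranged by prescribing linking numbers of the attaching 3-spheres in $S^7$, independently of the framings; together, these choices realize the full Wall pair $(A,b)$ on $W^8$.

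Since $A$ is unimodular, $\partial W^8$ is a simply-connected integral homology 7-sphere, hence by Smale a homotopy 7-sphere whose diffeomorphism class lies in the Kervaire-Milnor group $\Theta_7 \cong \mathbb{Z}/28$. The congruence \eqref{wallcond2} is necessary on any closed Spin realization: from Hirzebruch's formula $\mathrm{Sig} = (7p_2 - p_1^2)/45$ together with Spin-integrality $\widehat{A} = (7p_1^2 - 4p_2)/5760 \in \mathbb{Z}$, one deduces $\mathrm{Sig}(N^8) - q_1(N^8)^2[N^8] = -224\,\widehat{A}(N^8) \in 224\mathbb{Z}$, which translates to $\mathrm{Sig}(A) \equiv b A b^\tau \pmod{224}$. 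The integer $224$ equals the minimum positive signature of a closed almost parallelizable Spin 8-manifold, realized by Kervaire-Milnor's $M_0^8$ in \eqref{m08}, and the class of $\partial W^8$ in $\Theta_7$ is governed by the corresponding mod-$224$ reduction of $\mathrm{Sig}(A) - b A b^\tau$. When this reduction vanishes, $\partial W^8 \cong S^7$ and capping off with $D^8$ yields the desired closed smooth Spin 8-manifold $N^8$ realizing $I(N^8) = A$ and $q_1(N^8) = b_1 x_1 + \cdots + b_n x_n$.

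The main technical obstacle I expect is the precise framing-correction computation identifying the boundary class $[\partial W^8] \in \Theta_7$ with the mod-$224$ reduction of $\mathrm{Sig}(A) - b A b^\tau$, since $W^8$ is not almost parallelizable when $q_1 \neq 0$ and one must track how the non-stable $\beta$-component of each framing in $\pi_3(SO(4))$ feeds into the $\Theta_7$-obstruction. This essentially invokes Kervaire-Milnor's computation $bP_8 \cong \mathbb{Z}/28$ with generator detected by $M_0^8$, corrected by the $q_1^2$-term to match the signature identity used on the closed side. The parity condition \eqref{wallcond1} is also necessary on any closed realization via the Wu formula $x \cup x \equiv \nu_4 \cup x \pmod 2$ combined with the Spin identity $\nu_4 \equiv q_1 \pmod 2$, which together with the signature analysis confirms that \eqref{wallcond1} and \eqref{wallcond2} are both necessary and sufficient.
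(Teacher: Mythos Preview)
The paper does not give its own proof of this statement: Theorem~\ref{Wall8thm} is quoted from Wall \cite{Wall62} (with a secondary reference to \cite{Duan18}) and is used as a black box to produce the base manifold $N^8$. So there is no in-paper argument to compare against.

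That said, your sketch is a faithful outline of Wall's own method. The handlebody construction with 4-handles attached along framed links in $S^7$, the identification of the framing data in $\pi_3(SO(4))\cong\mathbb{Z}\oplus\mathbb{Z}$ with the diagonal entries $a_{ii}$ and the Spin class coefficients $b_i$ subject to the parity constraint \eqref{wallcond1}, the linking numbers realizing the off-diagonal $a_{ij}$, and finally the identification of the boundary class in $\Theta_7\cong\mathbb{Z}/28$ with the mod-$224$ reduction of $\mathrm{Sig}(A)-bAb^\tau$ --- this is exactly the architecture of Wall's proof. Your derivation of the necessity of \eqref{wallcond2} via $\mathrm{Sig}(N^8)-q_1(N^8)^2[N^8]=-224\,\widehat{A}(N^8)$ is also correct and matches the role $M_0^8$ plays in \eqref{m08}.

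One small remark: the stabilization $\pi_3(SO(4))\to\pi_3(SO)\cong\mathbb{Z}$ is surjective, not of image $2\mathbb{Z}$; the factor of $2$ you have in mind enters instead through the relation between the stable class and $p_1$ (equivalently, the passage from $p_1$ to $q_1=\tfrac{1}{2}p_1$). This does not affect the logic of your argument, only the bookkeeping. The point you flag as the main obstacle --- tracking the $\Theta_7$ obstruction when $q_1\neq 0$ --- is indeed where the work lies, and is handled in \cite{Wall62} by the Kervaire--Milnor framework you invoke.
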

\begin{proof}
The theorem has been proved in \cite{Duan18} based on \cite{Wall62}. Indeed Wall \cite{Wall62} showed that for each Wall pair $(A,b)$ there exists a closed $8$ dimensional topological manifold $N^8$ such that its intersection form is represented by $A$ and its first Spin class is represented by $b$. Moreover, $M=W\cup_h D^8$ where $W$ is a $3$-connected smooth manifold with boundary $\partial W$ a homotopy $7$-sphere and $h:\partial W\rightarrow \partial D^8$ is a homeomorphism. To show that $N^8$ is smooth, we may compute the Eells-Kuiper $\mu$-invariant \cite{EK62} of the boundary $\partial W$
\[
\mu(\partial W)\equiv\frac{bAb^\tau-{\rm Sig}(A)}{224} ~{\rm mod}~1.
\]
Since $\mu$-invariant is a complete invariant for homotopy $7$-spheres, we see that $\partial W$ is diffeomorphic to the standard $S^7$, and hence $N^8$ is smooth.
\end{proof}

We now apply Theorem \ref{Wall8thm} to construct an appropriate $N^8$ such that after particular pullback $f$ the total space $M^{24}$ in Diagram (\ref{f4op2bundledia}) will be a String manifold. 
For that, we may choose 
\begin{equation}\label{wallpairm4eq}
A={\rm diag}(H, E_8),\ \ \  b=(2, 2, 0, \cdots, 0)
\end{equation}
where 
\[
H=
\begin{pmatrix}
0 & 1\\
1 & 0
\end{pmatrix}
,~{\rm and}~ \ \ 
E_8=
\begin{pmatrix}
2 &1 &0 &0 &0 &0 &0 &0 \\
1 &2 &1 &0 &0 &0 &0 &0 \\
0 &1 &2 &1 &0 &0 &0 &0 \\
0 &0 &1 &2 &1 &0 &0 &0 \\
0 &0 &0 &1 &2 &1 &0 &1 \\
0 &0 &0 &0 &1 &2 &1 &0 \\
0 &0 &0 &0 &0 &1 &2 &0 \\
0 &0 &0 &0 &1 &0 &0 &2 
\end{pmatrix}
\]
are the hyperbolic matrix of rank $2$ and the Cartan matrix of the exceptional Lie group $E_8$ respectively. It is then clear that the conditions (\ref{wallcond1}) and (\ref{wallcond2}) are satisfied, and even better
\begin{equation}\label{Wallpairm4sigeq}
{\rm Sig}(A)=bAb^{\tau}=8.
\end{equation}
Hence, by Theorem \ref{Wall8thm} there exists a smooth $N^8$ such that 
\begin{equation}\label{h4n8eq}
H^4(N^8)\cong \mathbb{Z}\{a_1,a_2, b_1,\cdots b_8\},
\end{equation}
\begin{equation}\label{m4q1eq}
q_1(N^8)=2(a_1+a_2),
\end{equation}
and under the basis $\{a_1,a_2, b_1,\cdots b_8\}$ the intersection form of $N^8$ is represented by $A$ in (\ref{wallpairm4eq}).
In particular, we can use the Hirzebruch signature formula to calculate the second Pontryagin class of $N^8$.
\begin{lemma}\label{ponn8lemma}
\[
\hspace{3.6cm}
p(N^8)=1+4(a_1+a_2)+56a_1a_2.
\hspace{3.6cm}\Box
\]
\end{lemma}

In Diagram (\ref{f4op2bundledia}), by Lemma \ref{f4bundleclasslemma} below let us choose 
\[
f: N^8\rightarrow BF_4
\]
such that 
\begin{equation}\label{fdefeq}
f^\ast(x_4)=-(a_1+a_2),
\end{equation}
where $x_4\in H^4(BF_4)$ is the generator such that (cf. (\ref{x4theta}))
\begin{equation}\label{x4q1eq}
\Theta^\ast(x_4)=q_1\in H^4(BSpin(9)).
\end{equation}
We notice that by Proposition \ref{op2f4fibreponprop}
\[
p_1(\Theta^{\Delta})=2p_1=4q_1\in H^4(BSpin(9))
\]
Hence, by (\ref{tangentdecomeq}) we have
\[
\begin{split}
p_1(M^{24})
&=p_1(\pi^{\Delta})+\pi^\ast(p_1(N^8))\\
&=\tilde{f}^\ast(p_1(\Theta^{\Delta}))+4(a_1+a_2)\\
&=4\tilde{f}^\ast(q_1)+4(a_1+a_2)\\
&=4\tilde{f}^\ast\circ \Theta^\ast(x_4)+4(a_1+a_2)\\
&=4 \pi^\ast \circ f^\ast(q_1)+4(a_1+a_2)\\
&=0.
\end{split}
\]
Hence, $M^{24}$ is a String manifold, and from now on we may denote this particular String manifold by $M_4$.

\begin{lemma}\label{f4bundleclasslemma}
There is a natural isomorphism of sets
\[
[N^8, BF_4]\cong [\bigvee_{i=1}^{2}S_{a_i}^4\vee\bigvee_{j=1}^{8}S_{b_j}^4, BF_4]\cong \mathbb{Z}^{\oplus 10},
\]
where $S_{a_i}^4$ ($i=1$, $2$) and $S_{b_j}^4$ ($1\leq j\leq 8$) represents the cohomology class $a_i$ and $b_j$ in (\ref{h4n8eq}) respectively. 
\end{lemma}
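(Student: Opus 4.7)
The plan is to exploit the sparse CW structure of $N^8$ together with the high connectivity of $F_4$. Since $N^8$ is a $3$-connected Wall $8$-manifold (Theorem \ref{Wall8thm}) with $H^4(N^8;\mathbb{Z})\cong\mathbb{Z}^{\oplus 10}$ and $H^8(N^8;\mathbb{Z})\cong\mathbb{Z}$, standard CW theory furnishes a minimal cell decomposition with one $0$-cell, ten $4$-cells indexed by the basis $\{a_1,a_2,b_1,\dots,b_8\}$, and one top $8$-cell. In particular the $4$-skeleton is $W:=\bigvee_{i=1}^{2}S_{a_i}^{4}\vee\bigvee_{j=1}^{8}S_{b_j}^{4}$, and it coincides with the $7$-skeleton because no cells occur in dimensions $5,6,7$. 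Restriction along the inclusion $W\hookrightarrow N^8$ thus yields the candidate bijection $i^{*}:[N^8,BF_4]\to[W,BF_4]$, whose target is manifestly $\prod_{k=1}^{10}\pi_4(BF_4)$ by the universal property of the wedge.

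The essential input is the classical computation $\pi_3(F_4)=\mathbb{Z}$ and $\pi_i(F_4)=0$ for $4\leq i\leq 7$; one can read this off from the long exact sequence of the principal bundle $Spin(9)\to F_4\to\mathbb{O}P^2$ using the Bott-range values of $\pi_{*}(Spin(9))$, or simply quote Mimura's tables for exceptional Lie groups. Consequently $BF_4$ is $3$-connected with $\pi_4(BF_4)\cong\mathbb{Z}$ and $\pi_r(BF_4)=0$ for $5\leq r\leq 8$.

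With these vanishings in hand I would run standard obstruction theory. Given any $f:W\to BF_4$, the obstruction to extending $f$ across the single $8$-cell lies in $H^8(N^8,W;\pi_7(BF_4))\cong\pi_7(BF_4)=0$, so an extension exists and $i^{*}$ is surjective. Two extensions $f_1,f_2$ of the same $f$ automatically agree on the $7$-skeleton, and the standard difference class lives in $H^8(N^8;\pi_8(BF_4))\cong\pi_8(BF_4)=0$, so they are homotopic rel $W$; hence $i^{*}$ is injective. Combining with the identification $[W,BF_4]\cong\prod_{k=1}^{10}\pi_4(BF_4)\cong\mathbb{Z}^{\oplus 10}$ finishes the proof. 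The only genuinely non-mechanical step—and hence the main potential obstacle—is the homotopy vanishing $\pi_i(F_4)=0$ for $4\leq i\leq 7$; once that is quoted, the rest is routine bookkeeping with CW skeleta.
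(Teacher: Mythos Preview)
Your argument is correct and is essentially the same as the paper's: both use Mimura's computation that $\pi_i(BF_4)=0$ for $0\leq i\leq 8$, $i\neq 4$, together with the CW/cofibre structure of $N^8$ (a wedge of ten $4$-spheres with a single $8$-cell attached) to reduce $[N^8,BF_4]$ to $[W,BF_4]\cong\mathbb{Z}^{\oplus 10}$. The only cosmetic difference is that you phrase the reduction via obstruction theory while the paper phrases it via the exact sequence obtained by applying $[-,BF_4]$ to the cofibre sequence of the attaching map; these are two names for the same mechanism.
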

\begin{proof}
By the computation of Mimura \cite{Mimura67}, it is known that 
\[
\pi_i(BF_4)=0, \ \ \ 0 \leq i \leq 8, ~{\rm and}~i\neq 4.
\]
Then by applying the functor $[-, BF_4]$ to the cofibre sequence determined the attaching map of $N^8$, we get an exact sequence. From that the lemma follows easily.
\end{proof}

\subsection{Step $3$: determine the pullback image of $H^\ast(BSpin(9))$}
In the last step, we have constructed the String manifold $M_4$ as the total space of the $F_4$-$\mathbb{O}P^2$-bundle over the particular Wall manifold $N^8$, via the pullback diagram
\begin{gather}
\begin{aligned}
\xymatrix{
\mathbb{O}P^2 \ar@{=}[r]  \ar[d]  & \mathbb{O}P^2 \ar[d] \\
M_4 \ar[r]^<<<<<{\tilde{f}} \ar[d]^{\pi}   & BSpin(9)\ar[d]^{\Theta}\\
N^8 \ar[r]^<<<<<<<{f}   & BF_4,
}
\end{aligned}
\label{f4op2bundle2dia}
\end{gather}
such that 
\[
f^\ast(x_4)=-(a_1+a_2).
\]
It is clear that 
\begin{equation}\label{cohm4eq}
H^\ast(M_4)\cong H^\ast(N)[u_8]/\langle u_8^3-t a_1 a_2 u_8^2 \rangle,
\end{equation}
for some $t\in \mathbb{Z}$, and $a_1 a_2 u_8^2\in H^{24}(M_4)$ is a generator.
In order to compute the Pontryagin class of $M_4$, we need to determine the image of $H^\ast(BSpin(9))$ under $\tilde{f}^\ast$.

First, by the computation of Duan \cite{Duan18} it is known that (cf. Thomas \cite{Thomas62} and Benson-Wood \cite{BW95})
\begin{equation}\label{spin9cohmeq}
H^\ast(BSpin(9))\cong \mathbb{Z}[q_1, q_2, q_3, q_4]\oplus({\rm the}~2~{\rm torsion}~{\rm part}),
\end{equation}
where $q_i$ is called the {\it the $i$-th universal Spin class} with ${\rm deg}(q_i)=4i$. The Spin classes determine the Pontryagin classes, in which way they illustrate the divisibility of Pontryagin classes of Spin manifolds. In the low dimensions, the conversion formulae are
\be\label{pqforeq}
\begin{split}
&p_1=2q_1,\\
&p_2=2q_2+q_1^2,\\
&p_3=q_3,\\
&p_4=2q_4+q_2^2-2q_1q_3.
\end{split}
\ee
On the other hand, it is also known that 
\begin{equation}\label{f4cohmeq}
H^\ast(BF_4)\cong \mathbb{Z}[x_4, x_{12}, x_{16}]\oplus({\rm the}~{\rm torsion}~{\rm part}),
\end{equation}
where ${\rm deg}(x_i)=i$.
Since  $\mathbb{O}P^2$ is $7$-connected and $BF_4$ is $3$-connected, the fibre bundle 
\[
\mathbb{O}P^2\stackrel{i}{\rightarrow} BSpin(9)\stackrel{\Theta}{\rightarrow} BF_4
\]
is a cofibre sequence up to degree $11$, by the dual Blakers-Massey theorem or a simple argument of the Serre spectral sequence. In particular, we have 
\begin{equation}\label{x4theta}
\Theta^\ast(x_4)=q_1,
\end{equation}
and there is an exact sequence 
\[
\begin{split}
\rightarrow 0=H^7(\mathbb{O}P^2)& \rightarrow H^8(BF_4)\stackrel{\Theta^\ast}{\rightarrow} H^8(BSpin(9))\\ 
&\stackrel{i^\ast}{\rightarrow} H^8(\mathbb{O}P^2)\rightarrow H^9(BF_4)\rightarrow H^9(BSpin(9))=0.
\end{split}
\]
Since $\Theta^\ast$ maps $H^8(BF_4)\cong \mathbb{Z}\{x_4^2\}$ isomorphically onto $\mathbb{Z}\{q_1^2\}\leq H^8(BSpin(9))$ and $H^9(BF_4)\cong \mathbb{Z}/3$ by Toda \cite{Toda73}, the above exact sequence implies the short exact sequence
\[
0\rightarrow\mathbb{Z}\{q_2\}\rightarrow  H^8(\mathbb{O}P^2)\cong\mathbb{Z}\{u_8\}\rightarrow \mathbb{Z}/3\rightarrow 0.
\]
Hence 
\begin{equation}
i^\ast (q_2)=3u_8,
\end{equation}
which implies that 
\begin{equation}\label{tildefq2}
\tilde{f}^\ast (q_2)=3u_8+k a_1a_2,
\end{equation}
for some $k\in \mathbb{Z}$.
In order to determine the image of $q_3$ and $q_4$ under $\tilde{f}$, we need to use the Weyl invariants of $F_4$.
\begin{theorem}[Borel \cite{Borel55}]\label{Borelthm}
 Let $G$ be a compact Lie group with a maximal torus $T$ and Weyl group $W_G$. The inclusion $T\hookrightarrow G$ induces an isomorphism
 \[
  \hspace{3.9cm}
 H^\ast(BG;\mathbb{Q})\cong H^\ast(BT;\mathbb{Q})^{W_G}.
 \hspace{3.9cm}\Box
 \]
\end{theorem}
We borrow the notations from \hyperref[step1subsec]{Step $1$}.
 Let
\[
\prod_{i=1}^4(1+x_i^2)=1+p_1+p_2+p_3+p_4,
\]
and 
\[
r_i=\frac{1}{2}(x_1\pm x_2\pm x_3\pm x_4), \ \ 1\leq i\leq 8.
\]
Let 
\[
I_{2k}=\sum\limits_{i=1}^{4}x_{i}^{2k}+\sum\limits_{j=1}^{8}r_j^{2k}.
\]
It is known that (for instance, see \cite{Mehta88}, or \cite{Tshishiku15})
\[
H^\ast(BS;\mathbb{Q})^{W_{F_4}}=\mathbb{Q}[I_2, I_6, I_8, I_{12}].
\]
From this, it is not hard to show that (Section $19$ in \cite{BH58})
\begin{equation}\label{weylf4eq}
H^{\leq 16}(BS;\mathbb{Q})^{W_{F_4}}\cong \mathbb{Q}^{\leq 16}[p_1, -6p_3+p_1p_2, 12p_4+p_2^2-\frac{1}{2}p_1^2p_2].
\end{equation}
\begin{lemma}\label{forq3q3imagelemma}
\[
\tilde{f}^\ast(-6p_3+p_1p_2)=0, \ \ \ \tilde{f}^\ast(12p_4+p_2^2-\frac{1}{2}p_1^2p_2)=0.
\]
\end{lemma}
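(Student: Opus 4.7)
The plan is to combine Borel's theorem (Theorem \ref{Borelthm}) with a dimensional argument on $N^8$. First, observe that by the computation recalled in (\ref{weylf4eq}), both classes $-6p_3+p_1p_2 \in H^{12}(BS;\mathbb{Q})$ and $12p_4+p_2^2-\tfrac{1}{2}p_1^2p_2 \in H^{16}(BS;\mathbb{Q})$ are $W_{F_4}$-invariant. Since $W_{Spin(9)} \subset W_{F_4}$, Borel's theorem yields a commutative triangle of injections
\[
H^*(BF_4;\mathbb{Q}) \cong H^*(BS;\mathbb{Q})^{W_{F_4}} \hookrightarrow H^*(BS;\mathbb{Q})^{W_{Spin(9)}} \cong H^*(BSpin(9);\mathbb{Q}),
\]
in which $\Theta^*$ is identified with the inclusion of subrings. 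Consequently there exist classes $y_{12} \in H^{12}(BF_4;\mathbb{Q})$ and $y_{16} \in H^{16}(BF_4;\mathbb{Q})$, each a nonzero rational multiple of the generators $x_{12}$ and $x_{16}$ of (\ref{f4cohmeq}), with
\[
\Theta^*(y_{12}) = -6p_3+p_1p_2, \qquad \Theta^*(y_{16}) = 12p_4+p_2^2-\tfrac{1}{2}p_1^2p_2.
\]

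Next, the commutativity of (\ref{f4op2bundle2dia}) gives $\tilde{f}^* \Theta^* = \pi^* f^*$, so
\[
\tilde{f}^*(-6p_3+p_1p_2) = \pi^* f^*(y_{12}), \qquad \tilde{f}^*(12p_4+p_2^2-\tfrac{1}{2}p_1^2p_2) = \pi^* f^*(y_{16}).
\]
Since $N^8$ is an $8$-dimensional manifold, $H^k(N^8;\mathbb{Q}) = 0$ for $k > 8$; in particular $f^*(y_{12}) = 0$ and $f^*(y_{16}) = 0$, so both right-hand sides vanish rationally.

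Finally, we upgrade from rational to integral vanishing: by (\ref{cohm4eq}) the ring $H^*(M_4;\mathbb{Z})$ is torsion-free, hence $H^*(M_4;\mathbb{Z}) \to H^*(M_4;\mathbb{Q})$ is injective; since the two classes under consideration lie in the integral subring of $H^*(BSpin(9))$ generated by the Pontryagin classes, their images under $\tilde{f}^*$ are integral classes that vanish rationally, hence vanish integrally. There is no essential obstacle beyond this bookkeeping; the substantive ingredient is the Weyl-invariance identification already carried out in (\ref{weylf4eq}), after which dimension of $N^8$ does all the work.
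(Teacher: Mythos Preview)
Your argument is correct and follows the same route as the paper's proof: identify the two classes as lying in the image of $\Theta^*$ via Borel's theorem and the description (\ref{weylf4eq}) of the $W_{F_4}$-invariants, then use $\tilde{f}^*\Theta^* = \pi^* f^*$ together with $H^{>8}(N^8)=0$. Your added remark on torsion-freeness of $H^*(M_4)$ to pass from rational to integral vanishing is a nice bit of bookkeeping the paper leaves implicit.

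One small inaccuracy: the parenthetical claim that $y_{12}$ and $y_{16}$ are each ``a nonzero rational multiple of the generators $x_{12}$ and $x_{16}$'' is not justified, since $H^{12}(BF_4;\mathbb{Q})$ is spanned by $x_4^3$ and $x_{12}$ (and $H^{16}$ by $x_4^4$, $x_4x_{12}$, $x_{16}$), so $y_{12}$ and $y_{16}$ may well involve the decomposable monomials. This is harmless for your argument, which only uses that $y_{12}$, $y_{16}$ land in $H^{>8}(N^8)=0$, but you should drop or correct that clause.
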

\begin{proof}
We have the commutative diagram
\begin{gather}
\begin{aligned}
\xymatrix{
H^\ast(M_4) 
&H^\ast(BSpin(9))\ar[r]^{\rho^\ast}  \ar[l]_{\tilde{f}} 
& H^\ast(BS)^{W_{Spin(9)}}\\
H^\ast(N^8) \ar[u]^{\pi^\ast}
&H^\ast(BF_4) \ar[r]^{\rho^\ast} \ar[u]^{\Theta^\ast} \ar[l]_{f^\ast}
& H^\ast(BS)^{W_{F_4}},\ar[u]
}
\end{aligned}
\label{f4spin9weyldia}
\end{gather}
which particularly implies that $\tilde{f}^\ast\circ \Theta^\ast(x)=0$ for any $x$ with ${\rm deg}(x)>8$.
Then by Theorem \ref{Borelthm} and (\ref{weylf4eq}), the lemma follows easily.
\end{proof}
We can now determine the pullback image of $H^\ast(BSpin(9))$ through $\tilde{f}$.
\begin{lemma}\label{qimagelemmaq}
\[
\begin{split}
&\tilde{f}^\ast(q_1)=-(a_1+a_2),\\
&\tilde{f}^\ast(q_2)=3u_8+ka_1a_2,\\
&\tilde{f}^\ast(q_3)=-2(a_1+a_2)u_8,\\
&\tilde{f}^\ast(q_4)=-6u_8^2+4a_1a_2u_8-4ka_1a_2 u_8.
\end{split}
\]
\end{lemma}
\begin{proof}
The image of $q_1$ and $q_2$ were determined already. For the other two, we only need to use the conversion formulas (\ref{pqforeq}) to rewrite the two equalities in Lemma \ref{forq3q3imagelemma} in Spin classes, and then solve $\tilde{f}^\ast(q_3)$ and $\tilde{f}^\ast(q_4)$ from them directly.
\end{proof}
\subsection{Step $4$: compute the Pontryagin numbers of $M_4$}
We are now in a position to compute the Pontryagin numbers of $M_4$. First, let us translate the image of $H^\ast(BSpin(9))$ under $\tilde{f}$, obtained in Lemma \ref{qimagelemmaq}, in terms of Pontryagin classes by using the conversion formulas (\ref{pqforeq}).
\begin{lemma}\label{qimagelemmap}
\[
\begin{split}
&\hspace{4.1cm}\tilde{f}^\ast(p_1)=-2(a_1+a_2),\\
&\hspace{4.1cm}\tilde{f}^\ast(p_2)=6u_8+2(k+1)a_1a_2,\\
&\hspace{4.1cm}\tilde{f}^\ast(p_3)=-2(a_1+a_2)u_8,\\
&\hspace{4.1cm} \tilde{f}^\ast(p_4)=-3u_8^2-2ka_1a_2u_8. \hspace{4.1cm}\Box
\end{split}
\]
\end{lemma}
By (\ref{tangentdecomeq}), we know that
\begin{equation}\label{calponm4eq}
p(M_4)=\pi^\ast(p(N^8))\cdot \tilde{f}^\ast(p(\Theta^\Delta)).
\end{equation}
With Lemma \ref{ponn8lemma} for $p(N^8)$, Proposition \ref{unifibreponeq} for $p(\Theta^\Delta)$ and Lemma \ref{qimagelemmap} for $\tilde{f}$, it is now straightforward to calculate the Pontryagin class of $M_4$. 
\begin{lemma}\label{ponm4lemma}
\[
\hspace{1cm}
\begin{split}
p(M_4)=
&1+(36a_1a_2-2ka_1a_2-6u_8)-10(a_1+a_2)u_8\\
&+(-244a_1a_2+26ka_1a_2+39u_8)u_8\\
&+126(a_1+a_2)u_8^2+(1958a_1a_2+18ka_1a_2+18u_8)u_8^2. 
\hspace{1cm}\Box
\end{split}
\]
\end{lemma}
We can now determine the Pontryagin numbers of $M_4$.
\begin{lemma}\label{ponnom4lemma}
For $M_4$,
\[
p_2^3=3888, \ \ p_3^2=200, \ \
p_2p_4=2868, \ \ p_6=1958.
\]
\end{lemma}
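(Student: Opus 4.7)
The plan is to read off the four requested Pontryagin numbers directly from the explicit total Pontryagin class $p(M_4)$ given in Lemma \ref{ponm4lemma}, by expanding the relevant products in the cohomology ring
\[
H^\ast(M_4) \cong \mathbb{Z}[a_1, a_2, b_1, \ldots, b_8]\otimes \mathbb{Z}[u_8]/u_8^3
\]
from (\ref{cohm4eq}) and evaluating the resulting top classes on the fundamental class. Since the formula for $p(M_4)$ only involves $a_1, a_2$ and $u_8$, the classes $b_j$ never enter, so the full $E_8$-block of the intersection form is irrelevant here.

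First I would pin down exactly the multiplicative relations needed. From the hyperbolic block in the intersection form (\ref{wallpairm4eq}), together with Poincar\'e duality on the 3-connected manifold $N^8$, one has $a_1^2 = a_2^2 = 0$ while $a_1 a_2$ generates $H^8(N^8;\mathbb{Z})\cong \mathbb{Z}$. On the fiber side, $u_8^2$ generates $H^{16}(\mathbb{O}P^2)$ and $u_8^3 = 0$ by (\ref{cohomop2eq}). Taking $[M_4]$ to be the product orientation of the $\mathbb{O}P^2$-bundle $\pi$ then gives
\[
\int_{M_4} a_1 a_2\, u_8^2 = 1,
\]
so each Pontryagin number is literally the coefficient of $a_1 a_2 u_8^2$ in the corresponding polynomial in $p_2, p_3, p_4, p_6$.

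Second, I would carry out the four expansions, each of which collapses to only a couple of monomials once the relations $a_i^2 = 0$ and $u_8^3 = 0$ are applied. For example, multinomial expansion of $p_2^3 = (36 a_1 a_2 - 6 u_8)^3$ leaves only the single surviving term $\binom{3}{2}(36 a_1 a_2)(-6 u_8)^2 = 3888\, a_1 a_2 u_8^2$, and $p_3^2 = 100(a_1+a_2)^2 u_8^2 = 200\, a_1 a_2 u_8^2$ because $(a_1+a_2)^2 = 2 a_1 a_2$. The product $p_2 p_4$ keeps two surviving cross terms $36\cdot 39$ and $6\cdot 244$ whose sum is $2868$, and the degree-$24$ component of $p_6$ reduces, after discarding the $u_8^3$-piece, to the single monomial $1958\, a_1 a_2 u_8^2$.

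There is no conceptual obstacle; the task is pure bookkeeping in $H^\ast(M_4)$. The only mild subtlety is applying the relations $a_i^2 = 0$ and $u_8^3 = 0$ consistently and verifying that each contribution really has total degree $24$ (so that e.g. $(a_1 a_2)^2 u_8$ and $u_8^3$ are correctly discarded), so that the final coefficient of $a_1 a_2 u_8^2$ matches the claimed values $3888$, $200$, $2868$, $1958$.
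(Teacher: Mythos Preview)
Your proposal is correct and is essentially the paper's own approach: the paper simply records these four numbers as an immediate consequence of the explicit total class in Lemma~\ref{ponm4lemma}, and your expansion in $H^\ast(M_4)$ using $a_i^2=0$, $u_8^3=0$, and $\int_{M_4} a_1a_2u_8^2=1$ is exactly the computation that produces them.
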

\begin{proof}
Recall by (\ref{cohm4eq}) $u_8^3=t a_1 a_2 u_8^2$ and $a_1 a_2 u_8^2$ is a generator of $H^{24}(M)$.
By Lemma \ref{ponm4lemma}, it is straightforward to calculate that
\begin{equation}\label{m4pno.kt}
p_2^3=3888-216(k+t), \ \ p_3^2=200, \ \
p_2p_4=2868-234(k+t),\ \ p_6=1958+18(k+t).
\end{equation}
Hence, by Hirzebruch's signature theorem, it is easy to calculate that 
\begin{equation}\label{sigm4kt}
{\rm Sig}(M_4)=8+\frac{9590138}{70945875}(k+t).
\end{equation}
On the other hand, by a theorem of Chern-Hirzebruch-Serre \cite{CHS57} we know that
\begin{equation}\label{sigm4=8}
{\rm Sig}(M_4)={\rm Sig}(N^8){\rm Sig}(\mathbb{O}P^2)=8.
\end{equation}
Hence from (\ref{sigm4kt}) $k+t=0$, and the lemma follows from (\ref{m4pno.kt}).
\end{proof}

\appendix
\section{Twisted $A$-hats and twisted signatures}\label{backsec}
Let $M$ be a $4m$ dimensional oriented closed smooth manifold. There are two important characteristic numbers, namely {\it the (twisted) $A$-hat genus} and {\it the (twisted) signature}, which are the topological pillars of the Atiyah-Singer index theory. 

Equip $M$ with a Riemannian metric $g^{TM}$. Let $\nabla ^{TM}$ be the
associated Levi-Civita connection on $TM$ and $R^{TM}=(\nabla ^{TM})^{2}$ be
the curvature of $\nabla ^{TM}$. $\nabla ^{TM}$ extends canonically to a
Hermitian connection $\nabla ^{T_{\mathbf{C}}M}$ on $T_{\mathbf{C}%
}M=TM\otimes \mathbf{C}$, the complexification of $TM$. 

Let $\widehat{A}(TM,\nabla ^{TM})$ be the
Hirzebruch $\widehat{A}$-form defined by (cf. \cite{Z})
\begin{equation}
\widehat{A}(TM,\nabla ^{TM})={\det }^{1/2}\left( {\frac{{\frac{\sqrt{-1}}{%
4\pi }}R^{TM}}{\sinh \left( {\frac{\sqrt{-1}}{4\pi }}R^{TM}\right) }}\right).
\end{equation}

Let $E$ be a Hermitian vector bundles over $M$ carrying a Hermitian
connection $\nabla ^{E}$. Let $R^{E}=(\nabla
^{E})^{2}$ be the curvature of $\nabla
^{E} $. The Chern character form (cf. \cite{Z}) is defined as
\begin{equation}
\mathrm{ch}(E,\nabla ^{E})=\mathrm{tr}\left[ \exp \left( {\frac{\sqrt{-1}}{%
2\pi }}R^{E}\right) \right].
\end{equation}

The {\em $\widehat{A}$-genus} and the {\em twisted $\widehat{A}$-genus} are defined respectively as 
\begin{equation}
\begin{split}
& \widehat{A}(M)=\int_M \widehat{A}(TM,\nabla ^{TM}), \\
& \widehat{A}(M,E)=\int_M \widehat{A}(TM,\nabla ^{TM})\mathrm{ch}(E,\nabla ^{E}).
\end{split}%
\end{equation}

When $M$ is spin, let $S(TM)=S_{+}(TM)\oplus S_{-}(TM)$ denote the bundle of
complex spinors associated to the Spin structure. Then $S(TM)$ carries induced Hermitian metric and
connection preserving the above ${\bf Z}_2$-grading. Let
$$D_{\pm}:\Gamma(S_{\pm}(TM))\rightarrow \Gamma(S_{\mp}(TM))$$
denote the induced Spin Dirac operators (cf. \cite{Law}). By the Atiyah-Singer index theorem, 
\begin{equation}
\begin{split}
& \widehat{A}(M)=\ind(D), \\
& \widehat{A}(M,E)=\ind(D\otimes E).
\end{split}%
\end{equation}

Let $\widehat{L}(TM,\nabla ^{TM})$ be the
Hirzebruch characteristic form defined by (cf. \cite{Liu95}, \cite{Z})
\begin{equation}
\widehat{L}(TM,\nabla ^{TM})={\det }^{1/2}\left( {\frac{{\frac{\sqrt{-1}}{%
2\pi }}R^{TM}}{\tanh \left( {\frac{\sqrt{-1}}{4\pi }}R^{TM}\right) }}\right).
\end{equation}
Note that $\widehat{L}(TM,\nabla ^{TM})$ defined here is different from the classical Hirzebruch $L$-form defined by
\begin{equation*}
L(TM,\nabla ^{TM})={\det }^{1/2}\left( {\frac{{\frac{\sqrt{-1}}{%
2\pi }}R^{TM}}{\tanh \left( {\frac{\sqrt{-1}}{2\pi }}R^{TM}\right) }}\right).
\end{equation*} However they give same top (degree $4m$) forms and therefore 
\be \int_M\widehat{L}(TM,\nabla ^{TM}) =\int_M L(TM,\nabla ^{TM}).\ee
We would also like to point out that our $\widehat{L}$ is different from the ${\bf \widehat{L}}$ in page 233 of \cite{Law}.

Let $\mathrm{ch}(E,\nabla ^{E})=\sum_{i=0}^{2m}\mathrm{ch}^{i}(E,\nabla ^{E})$ such that $\mathrm{ch}^{i}(E,\nabla ^{E})$ is the degree $2i$ component. Define 
\be \mathrm{ch}_2(E,\nabla ^{E})=\sum_{i=0}^{2m}2^i\mathrm{ch}^{i}(E,\nabla ^{E}).\ee
It's not hard to see that 
\be \int_M \widehat{L}(TM,\nabla ^{TM}) \mathrm{ch}(E,\nabla ^{E})=\int_M L(TM,\nabla ^{TM}) \mathrm{ch}_2(E,\nabla ^{E}).\ee

Let $\Lambda_\CC (T^*M)$ be the complexified exterior algebra bundle of $TM$. Let $\langle \ , \
\rangle_{\Lambda_\CC (T^*M)}$ be the Hermitian metric on $\Lambda_\CC(T^*M)$
induced by $g^{TM}$. Let $dv$  be the Riemannian volume form associated to $g^{TM}$. Then $\Gamma(M, \Lambda_\CC (T^*M))$ has a Hermitian metric such that
for $\alpha, \alpha'\in \Gamma(M, \Lambda_\CC (T^*M))$,
$$\langle \alpha, \alpha'\rangle=\int_{M}\langle \alpha,
 \alpha'\rangle_{\Lambda_\CC (T^*M)}\,dv.$$

For $X\in TM$, let $c(X)$ be the Clifford action on $\Lambda_\CC (T^*M)$ defined by $c(X)=X^*-i_X$, where $X^*\in T^*M$ corresponds to $X$ via
$g^{TM}$. Let $\{e_1,e_2, \cdots, e_{2n}\}$ be an oriented orthogonal basis of $TM$. Set
$$\Omega=(\sqrt{-1})^{n}c(e_1)\cdots c(e_{2n}).$$ Then one can show that $\Omega$ is independent of the choice of the orthonormal basis and $\Omega_E=\Omega\otimes 1$ is a self-adjoint operator on $\Lambda_\CC (T^*M)\otimes E$ such that $\Omega_E^2=\mathrm{Id}|_{\Lambda_\CC(T^*M)\otimes E}$.

Let $d$ be the exterior differentiation operator and $d^*$ be the formal adjoint of $d$ with respect to the Hermitian metric.  The operator
$$D_{Sig}:=d+d^*=\sum_{i=1}^{2n} c(e_i)\nabla_{e_i}^{\Lambda_\CC (T^*M)}: \Gamma(M, \Lambda_\CC (T^*M))\to \Gamma(M, \Lambda_\CC (T^*M))$$ is the signature operator and the more general twisted signature operator is defined as (cf. \cite{G})
$$ D_{Sig}\otimes E:=\sum_{i=1}^{2n} c(e_i)\nabla_{e_i}^{\Lambda_\CC (T^*M)\otimes E}: \Gamma(M, \Lambda_\CC (T^*M)\otimes E)\to \Gamma(M, \Lambda_\CC (T^*M)\otimes E).$$

The operators $ D_{Sig}\otimes E$ and $\Omega_E$ are anti-commutative. If we decompose
$\Lambda_\CC (T^*M)\otimes E=\Lambda^+_\CC (T^*M)\otimes E\oplus \Lambda^-_\CC (T^*M)\otimes E$ into $\pm 1$ eigenspaces of $\Omega_E$, then $D_{Sig}\otimes E$ decomposes to define
\be (D_{Sig}\otimes E)^{\pm}: \Gamma(M, \Lambda^{\pm}_\CC (T^*M)\otimes E)\to \Gamma(M, \Lambda^{\mp}_\CC (T^*M)\otimes E).\ee

The {\it twisted signature} of $M$ is defined as the index of the operator $(D_{Sig}\otimes E)^{+}$ denoted by $\sig(M, E)$, 
\be \sig(M, E)=\ind((D_{Sig}\otimes E)^{+}).\ee
By the Atiyah-Singer index theorem, 
$$\sig(M, E)=\int_M  \widehat{L}(TM,\nabla ^{TM})\mathrm{ch}(E, \nabla^E).$$ Note that in the book \cite{Law} (Theorem 13.9), the following formula is given
$$\sig(M, E)=\int_M L(TM,\nabla ^{TM}) \mathrm{ch}_2(E,\nabla ^{E}).$$ 

There is an important twisted $\widehat{A}$-genus, namely the Witten genus \cite{W} by coupling $\widehat{A}(M)$ with {\it the Witten bundle} \cite{W}
\[
\Theta (T_{\mathbb{C}}M)=\overset{\infty }{\underset{n=1}{\otimes }}%
S_{q^{2n}}(\widetilde{T_{\mathbb{C}}M}),\ \ {\rm with}\ \
\widetilde{T_{\mathbb{C}}M}=TM\otimes \mathbb{C}-{\mathbb C}^{4m}.
\]
{\it The Witten genus} then can defined as
\[
W(M)=\left\langle \widehat{A}(TM)\mathrm{ch}\left( \Theta \left( T_{\mathbb{C%
}}M\right) \right) ,[M]\right\rangle.
\]

\end{document}